\renewcommand{\leq}{\leqslant}
\renewcommand{\geq}{\geqslant}
\newcommand{\bp}{\boldsymbol{p}}
\renewcommand{\phi}{\varphi}
\renewcommand{\a}{\alpha}
\newcommand{\id}{\mathrm{id}}
\renewcommand{\P}{\mathbb{P}}
\newcommand{\E}{\mathbb{E}}
\newcommand{\R}{\mathbb{R}}
\newcommand{\cX}{\mathcal{X}}
\newcommand{\cF}{\mathcal{F}}
\newcommand{\cP}{\mathcal{P}}
\newcommand{\cA}{\mathcal{A}}
\newcommand{\cC}{\mathcal{C}}
\def\ds1{\mathds{1}}
\renewcommand{\epsilon}{\varepsilon}
\newcommand{\wh}{\widehat}
\newcommand{\argmax}{\mathop{\mathrm{argmax}}}
\renewcommand{\mod}[2]{[#1 \,\, \mathrm{mod} \,\, #2]}
\newlength{\minipagewidth}
\newcommand{\bookbox}[1]{
\par\medskip\noindent
\framebox[\textwidth]{
\begin{minipage}{\minipagewidth}
{#1}
\end{minipage} } \par\medskip }
\newtheorem{theorem}{Theorem}
\newtheorem{corollary}{Corollary}
\newdefinition{remark}{Remark}
\newtheorem{lemma}{Lemma}
\newtheorem{proposition}{Proposition}
\newproof{proof}{Proof}
\newtheorem{example}{Example}
\newenvironment{ex}{\begin{example} \rm}{\end{example}}
\begin{document}

\begin{frontmatter}

\title{Pure Exploration in Finitely--Armed and Continuous--Armed Bandits}

\author{S\'ebastien Bubeck\corref{cor}}
\address{ INRIA Lille -- Nord Europe, SequeL project, \\
40 avenue Halley, 59650 Villeneuve d'Ascq, France}
\ead{sebastien.bubeck@inria.fr}

\author{R\'emi Munos}
\address{INRIA Lille -- Nord Europe, SequeL project, \\
40 avenue Halley, 59650 Villeneuve d'Ascq, France}
\ead{remi.munos@inria.fr}

\author{Gilles Stoltz}
\address{Ecole Normale Sup{\'e}rieure, CNRS \\
75005 Paris, France \\
\& \\
HEC Paris, CNRS, \\
78351 Jouy-en-Josas, France \\}
\ead{gilles.stoltz@ens.fr}
\cortext[cor]{Corresponding author.}

\begin{abstract}
We consider the framework of stochastic multi-armed bandit problems and study the possibilities
and limitations of forecasters that perform an on-line exploration of the arms.
These forecasters are assessed in terms of their simple regret, a regret
notion that captures the fact that exploration is only constrained by the number
of available rounds (not necessarily known in advance), in contrast to the case when the cumulative
regret is considered and when exploitation needs to be performed at the same time. We
believe that this performance criterion is suited to situations when
the cost of pulling an arm is expressed in terms of resources rather than rewards.
We discuss the links between the simple and the cumulative regret.
One of the main results in the case of a finite number of arms
is a general lower bound on the simple regret of a forecaster in terms of its
cumulative regret: the smaller the latter, the larger the former. Keeping this result in mind,
we then exhibit upper bounds on the simple regret of some forecasters.
The paper ends with a study devoted to continuous-armed bandit problems;
we show that the simple regret can be minimized with respect to a family of probability
distributions if and only if the cumulative regret
can be minimized for it. Based on this equivalence, we are able to prove that the separable metric spaces
are exactly the metric spaces on which
these regrets can be minimized
with respect to the family of all probability distributions with continuous mean-payoff functions.
\end{abstract}

\begin{keyword}
Multi-armed bandits \sep Continuous-armed bandits \sep Simple regret \sep Efficient exploration
\end{keyword}
\end{frontmatter}

\section{Introduction}

Learning processes usually face an exploration versus exploitation
dilemma, since they have to get information on the environment (exploration)
to be able to take good actions (exploitation). A key example is
the multi-armed bandit problem \cite{Rob52}, a sequential decision
problem where, at each stage, the forecaster has to pull one out of
$K$ given stochastic arms and gets a reward drawn at random
according to the distribution of the chosen arm.
The usual assessment criterion of a forecaster is given by its cumulative regret,
the sum of differences between the expected reward of the best arm and the obtained rewards.
Typical good forecasters, like UCB \cite{ACF02}, trade off between exploration and exploitation.

Our setting is as follows. The forecaster may sample the arms a given number of times $n$ (not necessarily known
in advance) and is then asked to output a recommended arm. He is evaluated by his simple regret, that is, the difference between the
average payoff of the best arm and the average payoff obtained by his recommendation.
The distinguishing feature from the classical multi-armed bandit problem is that
the exploration phase and the evaluation phase are separated.
We now illustrate why this is a natural framework for numerous applications.

Historically, the first occurrence of multi-armed bandit problems was given by medical trials.
In the case of a severe disease, ill patients only are included in the trial
and the cost of picking the wrong treatment
is high (the associated reward would equal a large negative value).
It is important to minimize the cumulative regret, since the test
and cure phases coincide. However, for cosmetic products, there exists a test phase separated from
the commercialization phase, and one aims at minimizing the regret of the commercialized product rather
than the cumulative regret in the test phase, which is irrelevant.
(Here, several formul{\ae} for a cream are considered and some quantitative measurement, like
skin moisturization, is performed.)

\medskip
The pure exploration problem addresses the design of strategies making the best possible use of available numerical
resources (e.g., as {\sc cpu} time) in order to optimize the performance of some decision-making task.
That is, it occurs in situations with a preliminary exploration phase
in which costs are not measured in terms of rewards but rather in terms of resources, that come in limited budget.

A motivating example concerns recent works on computer-go (e.g., the MoGo program \cite{GWMT06}).
A given time, i.e., a given amount of {\sc cpu} times is given to the player
to explore the possible outcome of sequences of plays and output a final decision.
An efficient exploration of the search space is obtained by considering a hierarchy of
forecasters minimizing some cumulative regret~-- see, for instance,
the {\sc uct} strategy \cite{KS06} and the {\sc bast} strategy \cite{CM07}.
However, the cumulative regret does not seem to be the right
way to base the strategies on, since the simulation costs are the same for exploring all
options, bad and good ones.
This observation
was actually the starting point of the notion of simple regret and of this work.

A final related example is the maximization of some function $f$, observed with noise, see, e.g., \cite{Kle04,BMSS09}.
Whenever evaluating $f$ at a point is costly (e.g., in terms of numerical or financial costs),
the issue is to choose as adequately as possible where to query the value of this function
in order to have a good approximation to the maximum.
The pure exploration problem considered here addresses exactly the design of adaptive exploration strategies
making the best use of available resources in order to make the most precise prediction
once all resources are consumed.

As a remark, it also turns out that in all examples considered above,
we may impose the further restriction that the forecaster ignores ahead of time
the amount of available resources (time, budget, or the number of patients to be included)~--
that is, we seek for anytime performance.

\medskip
The problem of pure exploration presented above was referred to as ``budgeted multi-armed bandit problem'' in
the open problem \cite{MLG04} (where, however, another notion of regret than simple regret is considered).
The pure exploration problem was solved in a minmax sense for the case of two arms
only and rewards given by probability distributions over $[0,1]$ in \cite{Sch06}.
A related setting is considered in \cite{EMM02} and \cite{MT04}, where
forecasters perform exploration during a random number of
rounds $T$ and aim at identifying an $\epsilon$--best arm. These articles study the possibilities
and limitations of policies achieving this goal with overwhelming $1-\delta$ probability
and indicate in particular upper and lower bounds on (the expectation of) $T$.
Another related problem is the identification of the best arm (with high probability).
However, this binary assessment criterion (the forecaster is either right or wrong
in recommending an arm) does not capture the possible closeness in performance of the recommended arm compared to the optimal one,
which the simple regret does. Moreover unlike the latter, this criterion is not suited for a distribution-free analysis.

\subsection*{Contents and structure of the paper}

We present formally the model in Section~\ref{sec:setup} and
indicate therein that our aim is to study the links between the simple and the cumulative regret. Intuitively, an
efficient allocation strategy for the simple regret should rely on some exploration--exploitation trade-off
but the rest of the paper shows that this trade-off is not exactly the same as in the case of the cumulative regret.

Our first main contribution (Theorem~\ref{th:MainDistDep}, Section~\ref{sec:smallbad})
is a lower bound on the simple regret in terms of the cumulative regret suffered in the exploration phase,
which shows that the minimal simple regret is larger as the bound on the cumulative regret is smaller.
This in particular implies that the uniform exploration of the arms is a good benchmark when the number
of exploration rounds $n$ is large.

In Section~\ref{sec:UBS} we then study the simple regret of some natural forecasters, including the one
based on uniform exploration, whose simple regret vanished exponentially fast.
(\emph{Note}: The upper bounds presented in this paper can however be improved by
the recent results of~\cite{AuBuMu10}.)
In Section~\ref{sec:comparison}, we show how one can somewhat circumvent the fundamental lower bound indicated above:
some strategies designed to have a small cumulative regret can outperform (for small or moderate values of $n$)
strategies with exponential rates of convergence for their simple regret; this is shown both by means of a
theoretical study and by simulations.

Finally we investigate in Section~\ref{sec:topo} the continuous-armed bandit
problem where the set of arms is a topological space. In this setting we use the simple
regret as a tool to prove that the separable metric spaces are exactly
the metric spaces for which it is possible to have a sublinear cumulative regret
with respect to the family of all probability distributions with continuous mean-payoff functions.
This would be our second main contribution.

\section{Problem setup, notation, structure of the paper}
\label{sec:setup}

We consider a sequential decision problem given by stochastic multi-armed bandits.
A finite number $K \geq 2$ of arms, denoted by $i = 1,\ldots,K$, are available and the $i$--th of them is
parameterized by a fixed (unknown) probability distribution $\nu_i$ over $[0,1]$, with expectation
denoted by $\mu_i$. At those rounds when it is pulled,
its associated reward is drawn at random according to $\nu_i$, independently of all previous rewards.
For each arm $i$ and all time rounds $n \geq 1$, we denote by $T_{i}(n)$ the number of times arm $i$ was pulled
from rounds 1 to $n$, and by $X_{i,1}, X_{i,2}, \ldots, X_{i,T_{i}(n)}$ the sequence of associated rewards.

\medskip
The forecaster has to deal simultaneously with two tasks,
a primary one and a secondary one.
The secondary task consists in exploration, i.e., the forecaster
should indicate at each round $t$ the arm $I_t$ to
be pulled, based on past rewards (so that $I_t$ is a random variable).
Then the forecaster gets to see the associated reward $Y_t$, also denoted by $X_{I_t,T_{I_t}(t)}$ with the notation above.
The sequence of random variables $(I_t)$ is referred to as an allocation strategy.
The primary task is to output at the end of each round $t$ a recommendation
$J_t$ to be used in a one-shot instance if/when the environment sends some stopping signal meaning that the
exploration phase is over. The sequence
of random variables $(J_t)$ is referred to as a recommendation strategy.
In total, a forecaster is given by an allocation and a recommendation strategy.

Figure~\ref{fig:descr} summarizes the description of the sequential game
and points out that the information available to the forecaster for choosing
$I_t$, respectively $J_t$, is formed by the $X_{i,s}$ for $i = 1,\ldots,K$ and
$s = 1,\ldots,T_i(t-1)$, respectively, $s = 1,\ldots,T_i(t)$. Note that we also allow the forecaster
to use an external randomization in the definition of $I_t$ and $J_t$.
\begin{figure}[t]
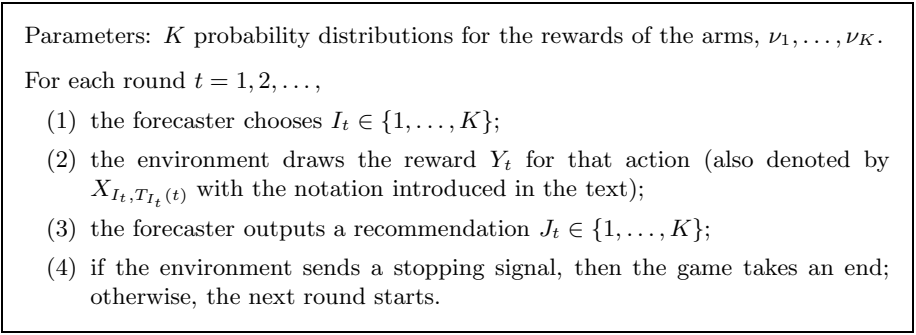

\bookbox{\small
Parameters: $K$ probability distributions for the rewards
of the arms, $\nu_1,\ldots,\nu_K$.

\medskip\noindent
For each round $t=1,2,\ldots,$
\begin{itemize}
\item[(1)]
the forecaster chooses $I_t \in \{1,\hdots,K\}$;
\item[(2)]
the environment draws the reward $Y_t$ for that action (also denoted by $X_{I_t, T_{I_t}(t)}$
with the notation introduced in the text);
\item[(3)]
the forecaster outputs a recommendation $J_t \in \{ 1,\ldots,K \}$;
\item[(4)]
if the environment sends a stopping signal, then the game takes an end;
otherwise, the next round starts.
\end{itemize}
}
\caption{\label{fig:descr}
The pure exploration problem for multi-armed bandits (with a finite number of arms).}
\end{figure}

\medskip
As we are only interested in the performances of the recommendation strategy $(J_t)$,
we call this problem the pure exploration problem for multi-armed bandits
and evaluate the forecaster through its simple regret, defined as follows.
First, we denote by
\[
\mu^* = \mu_{i^*} = \max_{i=1,\ldots,K} \mu_i
\]
the expectation of the rewards of
the best arm $i^*$ (a best arm, if there are several of them
with same maximal expectation).
A useful notation in the sequel is the gap $\Delta_i = \mu^* - \mu_i$
between the maximal expected reward and the one of the $i$--th arm;
as well as the minimal gap
\[
\Delta = \min_{i : \Delta_i > 0} \Delta_i~.
\]
Now, the simple regret at round $n$ equals the regret on a one-shot instance of the game for the recommended arm $J_n$,
that is, put more formally,
\[
r_n = \mu^* - \mu_{J_n} = \Delta_{J_n}~.
\]
A quantity of related interest is the cumulative regret at round $n$,
which is defined as
\[
R_n = \sum_{t=1}^n \mu^* - \mu_{I_t}~.
\]
A popular treatment of the multi-armed bandit problems
is to construct forecasters ensuring that $\E R_n = o(n)$,
see, e.g., \cite{LR85} or \cite{ACF02}, and even $R_n = o(n)$ a.s., as follows,
e.g., from \cite[Theorem 6.3]{ACFS02} together with the Borel--Cantelli lemma.
The quantity $r'_t = \mu^* - \mu_{I_t}$ is sometimes called instantaneous
regret. It differs from the simple regret $r_t$ and in particular,
$R_n = r'_1 + \ldots + r'_n$ is in
general not equal to $r_1 + \ldots + r_n$.
Theorem~\ref{th:MainDistDep}, among others,
will however indicate some connections between $r_n$ and $R_n$.

\begin{remark}
The setting described above is concerned with a finite number of arms.
In Section~\ref{sec:topo} we will extend it to the case of arms indexed by
a general topological space.
\end{remark}

\section{The smaller the cumulative regret, the larger the simple regret}
\label{sec:smallbad}

It is immediate that for well-chosen recommendation strategies, the simple regret can be upper bounded in
terms of the cumulative regret. For instance, the strategy that at time $n$ recommends arm $i$ with probability $T_i(n)/n$
(recall that we allow the forecaster to use an external randomization) ensures that
the simple regret satisfies $\E r_n = \E R_n/n$. Therefore, upper bounds on $\E R_n$ lead to upper bounds
on $\E r_n$.

We show here that, conversely,
upper bounds on $\E R_n$ also lead to lower bounds
on $\E r_n$: the smaller the guaranteed upper bound on $\E R_n$,
the larger the lower bound on $\E r_n$, no matter what the
recommendation strategy is.

\medskip
This is interpreted as a variation of the ``classical'' trade-off between exploration and exploitation.
Here, while the recommendation strategy $(J_n)$ relies only on the exploitation of the results
of the preliminary exploration phase, the design of the allocation strategy $(I_t)$ consists in an efficient exploration
of the arms. To guarantee this efficient exploration, past payoffs of the arms have to be considered and thus, even in the
exploration phase, some exploitation is needed.
Theorem~\ref{th:MainDistDep} and its corollaries aim at quantifying the needed respective
amount of exploration and exploitation.
In particular,
to have an asymptotic optimal rate of decrease for the simple regret,
each arm should be sampled a linear number of times, while
for the cumulative regret, it is known that
the forecaster should not do so more than a logarithmic number of times on the suboptimal arms.

Formally, our main result is reported below in Theorem~\ref{th:MainDistDep}.
It is strong in the sense that it lower bounds the simple regret of any forecaster for all
possible sets of Bernoulli distributions $\{ \nu_1,\ldots,\nu_K \}$ over the rewards with parameters
that are all distinct (no two parameters can be equal) and all different from $1$. Note however
that in particular these conditions entail that there is a unique best arm.

\begin{theorem}[Main result]
\label{th:MainDistDep}
For any forecaster (i.e., for any pair of allocation and recommendation strategies)
and any function $\epsilon : \{ 1,2,\ldots\} \to \R$ such that
\begin{quote}
{\small
for all (Bernoulli) distributions $\nu_1,\ldots,\nu_K$ on the rewards, there exists a constant
$C \geq 0$ with $\E R_n \leq C \, \epsilon(n)$, }
\end{quote}
the following holds true:
\begin{quote}
{\small
for all sets of $K \geq 3$ Bernoulli distributions on the rewards, with parameters that are all distinct and all different from $1$,
there exists a constant
$D \geq 0$ and an ordering $\nu_1,\ldots,\nu_K$ of the considered distributions such that
\[
\E r_n \geq \frac{\Delta}{2} \, e^{- D \epsilon(n)}~.
\]
}
\end{quote}
\end{theorem}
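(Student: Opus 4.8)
The plan is to exploit the tension emphasized just before the statement: a small cumulative regret forces the suboptimal arms to be sampled very few times, and an arm pulled only a handful of times cannot be reliably distinguished from one that is in fact optimal. I will turn this into a quantitative bound through an information-theoretic change-of-measure argument, using the Bretagnolle--Huber inequality to produce the exponential factor $e^{-D\epsilon(n)}$.

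First I would use the hypothesis to locate a rarely-pulled arm. Fixing any reference ordering, say $\nu_i = \mathrm{Ber}(a_i)$ with $a_1 > \cdots > a_K$, the assumed bound $\E R_n \leq C\epsilon(n)$ together with $R_n = \sum_{i \geq 2}\Delta_i T_i(n)$ and $\Delta_i \geq \Delta$ gives $\sum_{i \geq 2}\E T_i(n) \leq C\epsilon(n)/\Delta$. Since there are $K - 1 \geq 2$ suboptimal arms, at least one index $k$ satisfies $\E T_k(n) \leq C\epsilon(n)/\big(\Delta(K-1)\big)$. This is the arm I will try to ``hide''.

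Next comes the change of measure. I would compare this environment with a second Bernoulli environment that coincides with it on every arm except arm $k$, whose parameter is moved so that $k$ becomes the unique best arm. Because the two environments differ only on arm $k$, the chain rule for relative entropy applied to the law of the whole interaction up to round $n$ shows that the Kullback--Leibler divergence between the two induced distributions equals $\E T_k(n)\,\K\big(\mathrm{Ber}(\cdot)\,\|\,\mathrm{Ber}(\cdot)\big)$: a single per-pull divergence times the expected number of pulls. Since the parameters are fixed and bounded away from $1$, this per-pull divergence is a finite constant, so the whole divergence is at most $D\epsilon(n)$ for an explicit $D$ built from $C$, $\Delta$, $K$ and the chosen parameters. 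Applying the Bretagnolle--Huber inequality to the event $\{J_n = k\}$ then yields that the probability of \emph{not} recommending $k$ in the environment where $k$ is best, plus the probability of recommending $k$ in the environment where $k$ is suboptimal, is at least $\tfrac12 e^{-D\epsilon(n)}$. Translating a wrong recommendation into regret through $r_n \geq \Delta\,\ds1\{J_n \text{ suboptimal}\}$ finally converts this into the claimed bound on $\E r_n$, after adjusting $D$ so that the prefactor is exactly $\Delta/2$.

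The delicate point, which I expect to be the main obstacle, is that the statement asks for a large simple regret on a genuine \emph{reordering} of the given distributions, whereas the change of measure most naturally produces a second environment in which arm $k$ carries a brand-new parameter rather than one of the $a_i$. The Bretagnolle--Huber bound only guarantees that the regret is large in \emph{one} of the two compared environments, and the awkward case is precisely the one where the modified (non-permutation) environment is the bad one. Reconciling this is where the assumption $K \geq 3$ should enter: with at least two suboptimal arms available there is enough room to realize the ``hidden best arm'' environment as an actual permutation of the $a_i$ while altering only arms that were rarely pulled in the reference ordering — so that the divergence stays $O(\epsilon(n))$ — and to arrange the dichotomy so that \emph{both} alternatives are honest reorderings. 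Choosing these two orderings, and checking that the heavily-pulled optimal arm is never the one being modified (since any change to an arm sampled $\Theta(n)$ times would blow up the divergence), is the step demanding the most care; the remainder is bookkeeping on constants.
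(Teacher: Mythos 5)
Your diagnosis of the delicate point is exactly right, but the resolution you sketch cannot work, and the obstruction is quantitative rather than a matter of careful bookkeeping. Suppose, as you propose, that both compared environments are honest reorderings of the given parameters. For the dichotomy to produce any regret, the two orderings must place the best parameter $a_1$ at \emph{different} positions (otherwise recommending the common best position is good in both). Since the parameters are distinct, the two orderings then necessarily disagree at the position that carries $a_1$ under the first ordering. But the hypothesis of the theorem applies to \emph{every} Bernoulli environment, in particular to that first ordering itself: under it, the suboptimal positions are pulled $O(\epsilon(n))$ times in total, so the position carrying $a_1$ is pulled $n - O(\epsilon(n))$ times in expectation. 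The chain rule for relative entropy then gives
\[
\K\bigl(\P_{A},\P_{B}\bigr) \;\geq\; \E_{A}\bigl[T(n)\bigr]\,
\K\bigl(\mathrm{Ber}(a_1),\mathrm{Ber}(a_j)\bigr) \;=\; \Theta(n)\,,
\]
so Bretagnolle--Huber can only yield a lower bound of order $e^{-\Theta(n)}$. That proves Corollary~\ref{cor:LBunif} (where $\epsilon(n)=n$ is allowed) but not the theorem in the regime that matters, $\epsilon(n) = o(n)$, e.g., $\epsilon(n)=\ln n$ for UCB-type allocations. No value of $K$ creates the room you hope for: any two reorderings with different best-arm positions are at KL distance $\Theta(n)$ once the forecaster has small cumulative regret, which is precisely the case under the hypothesis. (A further, smaller issue: your rarely-pulled arm $k$, hence your alternative environment, depends on $n$.)

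The paper escapes this by letting the auxiliary environments leave the permutation family altogether: for each permuted environment $\P_\sigma$ it considers $\P_{1,\sigma}$ and $\P_{K,\sigma}$, in which the best (and also the worst) position is replaced by $\delta_0 = \mathrm{Ber}(0)$. This is legitimate because the hypothesis quantifies over \emph{all} Bernoulli environments; in the zeroed environment the zeroed position is suboptimal with gap $\mu_2$, hence pulled only $C\epsilon(n)/\mu_2$ times there, and the change of measure back to $\P_\sigma$ is performed not through a KL--type inequality but through the rare event ``this arm returned only zeros'', whose probability is $\E_{1,\sigma}\bigl[(1-\mu_1)^{T_{\sigma(1)}(n)}\bigr] \geq (1-\mu_1)^{C\epsilon(n)/\mu_2}$ by Jensen's inequality. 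This all-zeros event plays the role your KL term cannot play, because its cost is exponential in the pull count \emph{under the modified environment}, not under the original one. The requirement that the final bad environment be a genuine reordering is then restored by averaging rather than by a second change of measure: summing over all $K!$ permutations, the environments $\P_{K,\sigma}$ depend only on $\sigma(2),\ldots,\sigma(K-1)$, and pairing the two permutations that swap the roles of $\sigma(1)$ and $\sigma(K)$ gives $\sum_\sigma \E_{K,\sigma}\bigl[1-p_{\sigma(1),n}\bigr] \geq K!/2$; this symmetry step is where $K\geq 3$ and $\mu_2>\mu_K$ are actually used. So the exponential factor comes from a likelihood-ratio argument anchored at $\mathrm{Ber}(0)$ arms, and the reordering requirement is met \emph{on average} over permutations --- the step your proposal leaves open and that cannot be closed along the route you propose.
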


We insist on the fact that only \emph{sets}, that is, unordered collections, of distributions
are considered in the second part of the statement of the theorem.
Put differently, we merely show therein that for each ordered $K$--tuple of distributions that are as indicated above, there exists a
reordering that leads to the stated lower bound on the simple regret.
This is the best result that can be achieved. Indeed, some forecasters are sensitive to the ordering of the distributions
and might get a zero regret for a significant fraction of the
ordered $K$--tuples simply because, e.g., their strategy is to constantly pull a given arm,
which is sometimes the optimal strategy just by chance.
To get lower bounds in all cases we must therefore allow reorderings of $K$--tuples
(or, equivalently, orderings of sets).

\begin{corollary}[General distribution-dependent lower bound]
\label{cor:LBunif}
For any {fo\-re\-cas\-ter}, and
any set of $K \geq 3$ Bernoulli distributions on the rewards, with parameters that are all distinct and all different from $1$,
there exist two constants
$\beta > 0$ and $\gamma \geq 0$ and an ordering of
the considered distributions such that
\[
\E r_n \geq \beta \, e^{- \gamma n }~.
\]
\end{corollary}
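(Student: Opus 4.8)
The plan is to obtain Corollary~\ref{cor:LBunif} as an immediate specialization of Theorem~\ref{th:MainDistDep}, the key point being that \emph{every} forecaster automatically satisfies a (crude) upper bound on its cumulative regret, so that the hypothesis of the theorem is met for a suitable choice of the function $\epsilon$.

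First I would record the trivial observation that, since all rewards lie in $[0,1]$, each instantaneous regret satisfies $\mu^* - \mu_{I_t} \leq 1$; summing over $t = 1, \ldots, n$ then gives $R_n \leq n$ deterministically, and hence $\E R_n \leq n$ for every forecaster and every choice of the Bernoulli distributions $\nu_1, \ldots, \nu_K$. In particular, no forecaster can escape a linear bound on its cumulative regret, which is exactly what makes the theorem applicable without any further assumption on the allocation strategy.

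Next, I would invoke Theorem~\ref{th:MainDistDep} with the function $\epsilon(n) = n$. The bound just derived shows that the theorem's hypothesis holds with the constant $C = 1$, uniformly over all the distributions. The theorem then provides, for every set of $K \geq 3$ Bernoulli distributions with distinct parameters all different from $1$, a constant $D \geq 0$ and an ordering of the distributions for which
\[
\E r_n \geq \frac{\Delta}{2}\, e^{-D \epsilon(n)} = \frac{\Delta}{2}\, e^{-D n}~.
\]

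Finally, I would set $\beta = \Delta/2$ and $\gamma = D$, so that the announced inequality $\E r_n \geq \beta\, e^{-\gamma n}$ holds for the ordering provided by the theorem. Here $\beta > 0$ because the assumption that the parameters are all distinct forces a unique best arm (and $K \geq 3$ guarantees at least one suboptimal arm), so that $\Delta = \min_{i : \Delta_i > 0} \Delta_i$ is the minimum of a nonempty finite set of positive gaps and is therefore strictly positive; meanwhile $\gamma = D \geq 0$. I do not expect any genuine obstacle: all the substance is carried by Theorem~\ref{th:MainDistDep}, and the only content of the corollary is the remark that the linear function $\epsilon(n) = n$ is a legitimate—indeed universal—choice in the theorem's hypothesis.
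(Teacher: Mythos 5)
Your proposal is correct and is exactly the paper's own argument: the paper derives Corollary~\ref{cor:LBunif} from Theorem~\ref{th:MainDistDep} precisely by noting that the cumulative regret is always bounded by $n$, so the hypothesis holds with $\epsilon(n) = n$ (and $C = 1$), yielding $\E r_n \geq (\Delta/2)\,e^{-Dn}$ with $\beta = \Delta/2 > 0$ and $\gamma = D$. Your added remark on why $\Delta > 0$ (distinct parameters force a unique best arm) is a sound and harmless elaboration of the same reasoning.
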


Theorem~\ref{th:MainDistDep} is proved below and Corollary~\ref{cor:LBunif}
follows from the fact that the cumulative regret is always bounded by $n$.
To get further the point of the theorem, one should keep in mind that the typical (distribution-dependent)
rate of growth of the cumulative regret of good algorithms, e.g., UCB1 \cite{ACF02}, is $\varepsilon(n) = \ln n$.
This, as asserted in \cite{LR85}, is the optimal rate.
Hence a recommendation strategy based on such allocation strategy is bound to suffer a simple regret that decreases
at best polynomially fast. We state this result for the slight modification UCB$(\a)$ of UCB1
stated in Figure~\ref{fig:alloc} and introduced in \cite{AMS09};
its proof relies on noting that it achieves a cumulative regret bounded by
a large enough distribution-dependent constant times $\epsilon(n) = \a \ln n$.

\begin{corollary}[Distribution-dependent lower bound for UCB$(\a)$]
\label{cor:LBUCBp}
The allocation strategy $(I_t)$ given by the forecaster UCB$(\a)$ of Figure~\ref{fig:alloc} ensures that
for any recommendation strategy $(J_t)$ and
all sets of $K \geq 3$ Bernoulli distributions on the rewards, with parameters that are all distinct and all different from $1$,
there exist two constants
$\beta > 0$ and $\gamma \geq 0$ (independent of $\a$) and an ordering of
the considered distributions such that
\[
\E r_n \geq \beta \, n^{- \gamma \a }~.
\]
\end{corollary}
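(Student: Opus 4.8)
The plan is to apply Theorem~\ref{th:MainDistDep} to the forecaster whose allocation strategy is UCB$(\a)$, after verifying its hypothesis with the explicit choice $\epsilon(n) = \a \ln n$. Everything then reduces to two points: checking that UCB$(\a)$ guarantees a cumulative regret of this order with a constant that does \emph{not} involve $\a$, and then tracking the $\a$-dependence through the conclusion of the theorem so as to produce an exponent $\gamma$ that is independent of $\a$.

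First I would recall the standard distribution-dependent analysis of UCB$(\a)$ (as in \cite{AMS09} and \cite{ACF02}): for the allocation strategy of Figure~\ref{fig:alloc}, each suboptimal arm $i$ satisfies a bound of the form $\E T_i(n) \le c \, \a (\ln n)/\Delta_i^2 + O(1)$, so that summing the contributions $\Delta_i \, \E T_i(n)$ yields $\E R_n \le C \, \a \ln n$ for all $n \ge 2$, where the constant $C$ depends only on the gaps $\Delta_i$ (the additive lower-order terms being absorbed into $C$ for $n$ large enough) and, crucially, not on $\a$. In other words, with the fixed function $\epsilon(n) = \a \ln n$ the inequality $\E R_n \le C \, \epsilon(n)$ holds with an $\a$-free constant $C$, which is exactly the hypothesis required by Theorem~\ref{th:MainDistDep}.

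Applying Theorem~\ref{th:MainDistDep} with this $\epsilon$ then produces, for every set of $K \ge 3$ Bernoulli distributions with distinct parameters all different from $1$, a constant $D \ge 0$ and an ordering of the distributions such that
\[
\E r_n \ge \frac{\Delta}{2}\, e^{-D \epsilon(n)} = \frac{\Delta}{2}\, e^{-D \a \ln n} = \frac{\Delta}{2}\, n^{-D\a}~.
\]
It then suffices to set $\beta = \Delta/2 > 0$ and $\gamma = D$, provided $D$ can be shown to be independent of $\a$.

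The hard part, and the only point requiring care, is precisely this last independence claim. The conclusion of Theorem~\ref{th:MainDistDep} rests on a change-of-measure argument in which the allocation strategy enters only through the number of pulls of the suboptimal arms, that is, only through the regret constant $C$; the exponent it produces is of the form $D = (\text{a quantity depending on the Kullback--Leibler divergences and gaps of the distributions}) \times C$. Since we have arranged for $C$ to be free of $\a$ (the whole $\a$-dependence having been pushed into $\epsilon(n) = \a \ln n$), the resulting $D$ is itself free of $\a$, and $\gamma = D$ is the desired $\a$-independent exponent. I would therefore make the dependence $D \propto C$ explicit from the proof of Theorem~\ref{th:MainDistDep} rather than invoking the theorem as a black box, since this proportionality is exactly what guarantees that increasing $\a$ merely rescales the exponent $\gamma \a$ while leaving $\gamma$ unchanged.
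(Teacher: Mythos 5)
Your proposal is correct and takes essentially the same route as the paper: the paper's own proof of Corollary~\ref{cor:LBUCBp} consists precisely in noting that UCB$(\a)$ has cumulative regret bounded by a distribution-dependent, $\a$-free constant times $\epsilon(n) = \a \ln n$, and then applying Theorem~\ref{th:MainDistDep}, whose Step~6 exhibits the exponent $D$ as proportional to $C$, yielding $\E r_n \geq (\Delta/2)\, n^{-D\a}$ with $\beta = \Delta/2$ and $\gamma = D$. Your explicit tracking of the proportionality $D \propto C$ (hence the $\a$-independence of $\gamma$) is exactly the point the paper compresses into its one-sentence remark before the corollary.
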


\begin{proof}
The intuitive version of the proof of Theorem~\ref{th:MainDistDep}
is as follows. The basic idea is to consider a tie case when the best and worst arms have zero empirical means; it happens often enough (with a probability at least exponential in the number of times we pulled these arms)
and results in the forecaster basically having to pick another arm and suffering some regret.
Permutations are used to control the case of untypical or naive forecasters that would despite all pull an arm with zero empirical mean, since they force a situation when those forecasters choose the worst arm instead of the best one.

Formally, we fix the forecaster (a pair of allocation and recommendation strategies) and a corresponding function $\epsilon$
such that the assumption of the theorem is satisfied. We denote by $\bp_n = (p_{1,n},\hdots,p_{K,n})$
the probability distribution from which $J_n$ is drawn at random thanks to an auxiliary distribution.
Note that $\bp_{n}$ is a random vector which depends on $I_1,\hdots,I_n$ as well as on the obtained rewards $Y_1,\ldots,Y_n$.
We consider below a set of $K \geq 3$ distinct Bernoulli distributions, satisfying the conditions of the theorem; actually, we
only use below that their parameters are (up to a first ordering) such that $1 > \mu_1 > \mu_2 \geq \mu_3 \geq \ldots \geq \mu_K \geq 0$ and
$\mu_2 > \mu_K$ (thus, $\mu_2 > 0$).

\medskip
\textbf{Step 0} introduces another layer of notation. The latter depends on permutations $\sigma$
of $\{ 1,\,\ldots,\,K\}$. To have a gentle start, we first describe the notation when
the permutation is the identity, $\sigma = \id$. We denote by $\P$ and $\E$
the probability and expectation with respect to the original $K$-tuple
$\nu_1,\ldots,\nu_K$ of distributions over the arms. For $i = 1$ (respectively, $i = K$), we denote by
$\P_{i,\id}$ and $\E_{i,\id}$ the probability and expectation with
respect to the $K$-tuples formed by $\delta_0,\nu_2,\ldots,\nu_K$
(respectively, $\delta_0,\nu_2,\ldots,\nu_{K-1},\delta_0$), where
$\delta_0$ denotes the Dirac measure on $0$.

For a given permutation $\sigma$,
we consider a similar notation up to a reordering, as follows.
The symbols $\P_{\sigma}$ and $\E_{\sigma}$ refer to the probability and expectation with
respect to the $K$-tuple of distributions over the arms formed by the $\nu_{\sigma^{-1}(1)},\ldots,
\nu_{\sigma^{-1}(K)}$. Note in particular that the $i$--th best arm is located in the $\sigma(i)$--th position.
Now, we denote for $i = 1$ (respectively, $i = K$) by
$\P_{i,\sigma}$ and $\E_{i,\sigma}$ the probability and expectation with
respect to the $K$-tuple formed by the $\nu_{\sigma^{-1}(i)}$, except that we replaced the
best of them, located in the $\sigma(1)$--th position, by a Dirac measure on 0 (respectively,
the best and worst of them, located in the $\sigma(1)$--th and $\sigma(K)$--th positions,
by Dirac measures on 0). We provide now a proof in six steps.

\medskip
\textbf{Step 1} lower bounds the quantity of interest
by an average of the simple regrets obtained by reordering,
\[
\max_{\sigma} \,\, \E_\sigma r_n \geq
\frac{1}{K!} \sum_{\sigma} \,\, \E_\sigma r_n
\geq \frac{\mu_1 - \mu_2}{K!} \,
\sum_{\sigma} \E_{\sigma} \! \left[ 1 - p_{\sigma(1),n} \right]~,
\]
where we used that under $\P_\sigma$, the index of the best arm is $\sigma(1)$
and the minimal regret for playing any other arm is at least $\mu_1 - \mu_2$.

\medskip
\textbf{Step 2} rewrites
each term of the sum over $\sigma$ as the product of three simple terms. We use
first that $\P_{1,\sigma}$ is the same as $\P_\sigma$,
except that it ensures that arm $\sigma(1)$ has zero reward throughout.
Denoting by
\[
C_{i,n} = \sum_{t=1}^{T_i(n)} X_{i,t}
\]
the cumulative reward of the $i$--th arm till round $n$, one then gets
\begin{eqnarray*}
\E_{\sigma} \bigl[ 1 - p_{\sigma(1),n} \bigr] & \geq &
\E_{\sigma} \! \left[ \left( 1 - p_{\sigma(1),n} \right) \ds1_{ \{ C_{\sigma(1),n} = 0 \} } \right] \\
& = & \E_{\sigma} \Bigl[ 1 - p_{\sigma(1),n} \,\, \big| \,\ C_{\sigma(1),n} = 0 \Bigr]
\times \, \P_\sigma \left\{ C_{\sigma(1),n} = 0 \right\} \\
& = & \E_{1,\sigma} \bigl[ 1 - p_{\sigma(1),n} \bigl]
\,\, \P_\sigma \left\{ C_{\sigma(1),n} = 0 \right\}~.
\end{eqnarray*}
Second, repeating the argument from $\P_{1,\sigma}$ to $\P_{K,\sigma}$,
\begin{eqnarray*}
\E_{1,\sigma} \bigl[ 1 - p_{\sigma(1),n} \bigl]
& \geq & \E_{1,\sigma} \Bigl[ 1 - p_{\sigma(1),n} \,\, \big| \,\ C_{\sigma(K),n} = 0 \Bigr]
\,\, \P_{1,\sigma} \left\{ C_{\sigma(K),n} = 0 \right\} \\
& = & \E_{K,\sigma} \bigl[ 1 - p_{\sigma(1),n} \bigl]
\,\, \P_{1,\sigma} \left\{ C_{\sigma(K),n} = 0 \right\}
\end{eqnarray*}
and therefore,
\begin{equation}
\label{eq:steps}
\E_{\sigma} \! \left[ 1 - p_{\sigma(1),n} \right] \geq
\E_{K,\sigma} \bigl[ 1 - p_{\sigma(1),n} \bigl]
\,\, \P_{1,\sigma} \! \left\{ C_{\sigma(K),n} = 0 \right\} \,\, \P_\sigma \! \left\{ C_{\sigma(1),n} = 0 \right\}~.
\end{equation}

\medskip
\textbf{Step 3} deals with the second term in the right-hand side of~(\ref{eq:steps}),
\[
\P_{1,\sigma} \left\{ C_{\sigma(K),n} = 0 \right\}
= \E_{1,\sigma} \! \left[ \left( 1 - \mu_K \right)^{T_{\sigma(K)}(n)} \right]
\geq \left( 1 - \mu_K \right)^{\E_{1,\sigma} T_{\sigma(K)}(n)}~,
\]
where the equality can be seen by conditioning on $I_1, \hdots, I_n$
and then taking the expectation, whereas the inequality is a consequence of Jensen's inequality.
Now, the expected number of times the suboptimal arm $\sigma(K)$ is pulled
under $\P_{1,\sigma}$ (for which $\sigma(2)$ is the optimal arm) is bounded by the regret,
by the very definition of the latter:
$( \mu_2 - \mu_K) \, \E_{1,\sigma} T_{\sigma(K)}(n) \leq \E_{1,\sigma} R_n$.
By hypothesis, there exists a constant $C$ such that for all $\sigma$,
$\,\, \E_{1,\sigma} R_n \leq C \, \epsilon(n)$;
the constant $C$ in the hypothesis of the theorem depends on the (order of the) distributions
but this can be circumvent by taking the maximum of $K!$ values to get the previous statement.
We finally get
\[
\P_{1,\sigma} \bigl\{  C_{\sigma(K),n} = 0 \bigr\}
\geq \left( 1 - \mu_K \right)^{C \epsilon(n) / (\mu_2 - \mu_K)}~.
\]

\medskip
\textbf{Step 4} lower bounds the third term in the right-hand side of~(\ref{eq:steps}) as
\[
\P_{\sigma} \bigl\{ C_{\sigma(1),n} = 0 \bigr\}
\geq \left( 1 - \mu_1 \right)^{C \epsilon(n) / \mu_2}~.
\]
We denote by $W_n = (I_1, Y_1, \ldots, I_n, Y_n)$ the history of pulled arms and
obtained payoffs up to time $n$.
What follows is reminiscent of the techniques used in~\cite{MT04}.
We are interested in certain realizations $w_n = (i_1, y_1, \ldots, i_n, y_n)$
of the history: we consider the subset $\mathcal{H}$ formed by the elements $w_n$
such that whenever $\sigma(1)$ was played, it got a null reward,
that is, such that $y_t = 0$ for all indexes $t$ with $i_t = \sigma(1)$.
For all arms $j$, we then denote by $t_{j}(w_n)$ the realization of $T_j(n)$ corresponding to $w_n$.
Since the likelihood of an element $w_n \in \mathcal{H}$ under $\P_\sigma$ is $(1-\mu_1)^{t_{\sigma(1)}(w_n)}$ times
the one under $\P_{1,\sigma}$, we get
\begin{multline}
\nonumber
\P_\sigma \bigr\{ C_{\sigma(1),n} = 0 \bigr\} = \sum_{w_n \in \mathcal{H}} \P_\sigma \left\{ W_n = w_n \right\}  \\
= \sum_{w_n \in \mathcal{H}} \left( 1 - \mu_1 \right)^{t_{\sigma(1)}(w_n)} \, \P_{1,\sigma} \left\{ W_n = w_n \right\}
= \E_{1,\sigma} \left[ \left( 1 - \mu_1 \right)^{T_{\sigma(1)}(n)} \right]~.
\end{multline}

The argument is concluded as before, first by Jensen's inequality and then,
by using that
$\mu_2 \, \E_{1,\sigma} T_{\sigma(1)}(n) \leq \E_{1,\sigma} R_n
\leq C \, \epsilon(n)$
by definition of the regret and the hypothesis put on its control.

\medskip
\textbf{Step 5} resorts to a
symmetry argument to show that as far as the first term
of the right-hand side of~(\ref{eq:steps}) is concerned,
\[
\sum_{\sigma} \E_{K,\sigma} \Bigl[ 1 - p_{\sigma(1),n} \Bigl] \geq \frac{K!}{2} .
\]
Since $\P_{K,\sigma}$ only depends on $\sigma(2), \ldots, \sigma(K-1)$,
we denote by $\P^{\sigma(2), \ldots, \sigma(K-1)}$ the common value of
these probability distributions when $\sigma(1)$ and $\sigma(K)$ vary (and
a similar notation for the associated expectation).
We can thus group the permutations $\sigma$ two by two according to these $(K-2)$--tuples,
one of the two permutations being defined by $\sigma(1)$ equal to one of the two elements of $\{1,\ldots,K\}$
not present in the $(K-2)$--tuple,
and the other one being such that $\sigma(1)$ equals the other such element. Formally,
\begin{eqnarray*}
\sum_{\sigma} \E_{K,\sigma} p_{\sigma(1),n}
& = & \sum\limits_{j_2,\ldots,j_{K-1}} \E^{j_2,\ldots,j_{K-1}} \left[ \sum\limits_{j \in \{1,\ldots,K\}
\setminus \{j_2,\ldots,j_{K-1} \}}
p_{j,n} \right] \\
& \leq & \sum\limits_{j_2,\ldots,j_{K-1}} \E^{j_2,\ldots,j_{K-1}} \bigl[ 1 \bigr] = \frac{K!}{2}~,
\end{eqnarray*}
where the summations over $j_2,\ldots,j_{K-1}$ are over all possible
$(K-2)$--tuples of distinct elements in $\{1,\ldots,K\}$.

\medskip
\textbf{Step 6} simply puts all pieces together and lower bounds $\ \ \displaystyle{\max_{\sigma} \,\, \E_\sigma r_n} \ \ $ by
\begin{eqnarray*}
& & \frac{\mu_1 - \mu_2}{K!} \,\, \sum_{\sigma} \, \E_{K,\sigma} \bigl[ 1 - p_{\sigma(1),n} \bigl]
\,\, \P_\sigma \left\{ C_{\sigma(1),n} = 0 \right\} \,\, \P_{1,\sigma} \left\{ C_{\sigma(K),n} = 0 \right\} \\
& \geq &
\frac{\mu_1 - \mu_2}{2} \, \left(
\left( 1 - \mu_K \right)^{C / (\mu_2 - \mu_K)} \, \left( 1 - \mu_1 \right)^{C / \mu_2}
\right)^{\epsilon(n)}~.
\end{eqnarray*}
\end{proof}

\section{Upper bounds on the simple regret}
\label{sec:UBS}

In this section, we aim at qualifying the implications of Theorem~\ref{th:MainDistDep} by pointing out
that is should be interpreted as a result for large $n$ only. For moderate values of $n$, strategies
not pulling each arm a linear number of times in the exploration phase can have a smaller simple regret.
To do so, we consider only two natural and well-used allocation strategies since the
aim of this paper is mostly to study the links between the cumulative and simple regret and not really
to prove the best possible bounds on the simple regret.
More sophisticated allocation strategies were considered recently in~\cite{AuBuMu10} and they can be used
to improve on the upper bounds on the simple regret presented below.

The first allocation strategy is the uniform allocation, which we use as a simple benchmark; it pulls
each arm a linear number of times (see Figure~\ref{fig:alloc} for its formal description).
The second one is UCB$(\a)$ (a variant of UCB1 introduced in \cite{AMS09} using an exploration rate parameter $\a>1$
and described also in Figure~\ref{fig:alloc}). It is designed for the classical exploration--exploitation dilemma
(i.e., it minimizes the cumulative regret) and pulls suboptimal arms a logarithmic number of times only.
\begin{figure}[h]
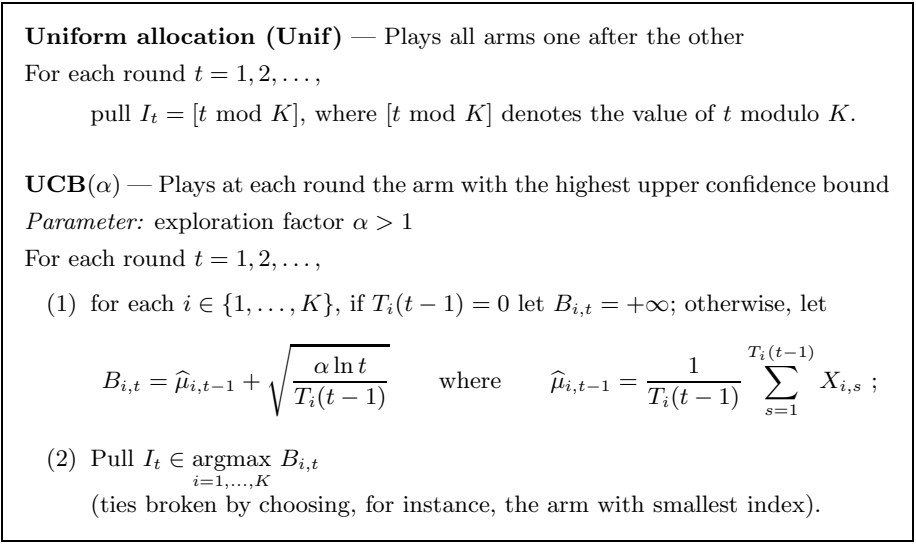

\bookbox{\small
\textbf{Uniform allocation (Unif)} --- Plays all arms one after the other

\smallskip \noindent
For each round $t=1,2,\ldots,$
\begin{itemize}
\item[] pull $I_t = \mod{t}{K}$, where $\mod{t}{K}$ denotes the value of $t$ modulo $K$.
\end{itemize}

\bigskip
\textbf{UCB$(\a)$} ---
Plays at each round the arm with the highest upper confidence bound

\smallskip
{\em Parameter:} exploration factor $\a>1$

\smallskip \noindent
For each round $t=1,2,\ldots,$

\begin{itemize}
\item[(1)]
for each $i \in \{1,\ldots,K\}$, if $T_i(t-1)=0$ let $B_{i,t}=+\infty$; otherwise, let
\[
B_{i,t}=\wh{\mu}_{i,t-1} + \sqrt{\frac{\a \ln t}{T_{i}(t-1)}}
\qquad \mbox{where} \qquad \wh{\mu}_{i,t-1} = \frac{1}{T_{i}(t-1)} \sum_{s = 1}^{T_{i}(t-1)} X_{i,s}~;
\]
\item[(2)] Pull $\displaystyle{I_t \in \argmax_{i = 1,\ldots,K} \, B_{i,t}}$ \\
(ties broken by choosing, for instance, the arm with smallest index).
\end{itemize}
}
\caption{\label{fig:alloc} Two allocation strategies.}
\end{figure}

In addition to these allocation strategies we consider three recommendation strategies,
the ones that recommend respectively the empirical distribution of plays, the empirical best arm, or the most played arm.
They are formally defined in Figure~\ref{fig:recomm}.
\begin{figure}[h]
\bookbox{\small
Parameters: the history $I_1,\ldots,I_n$ of played actions and of their associated rewards $Y_1,\ldots,Y_n$,
grouped according to the arms as $X_{i,1},\ldots,X_{i,T_{i}(n)}$, for $i = 1,\ldots,n$

\bigskip
\textbf{Empirical distribution of plays (EDP)} \\
Recommends arm $i$ with probability $T_i(n)/n$, that is, draws $J_n$ at random according to
\[
\bp_n = \left( \frac{T_1(n)}{n}, \, \ldots, \, \frac{T_K(n)}{n} \right)~.
\]

\bigskip
\textbf{Empirical best arm (EBA)} \\
Only considers arms $i$ with $T_{i}(n) \geq 1$, computes their associated empirical means
\[
\wh{\mu}_{i,n} = \frac{1}{T_{i}(n)} \sum_{s = 1}^{T_{i}(n)} X_{i,s}~,
\]
and forms the recommendation
\[
J_n \in \argmax_{i=1,\ldots,K} \, \wh{\mu}_{i,n}
\]
(ties broken in some way).

\bigskip
\textbf{Most played arm (MPA)} \\
Recommends the most played arm,
\[
J_n \in \argmax_{i=1,\ldots,K} \, T_{i}(n)
\]
(ties broken in some way).}
\caption{\label{fig:recomm} Three recommendation strategies.}
\end{figure}

\medskip
Table~\ref{tab:bounds} summarizes the distribution-dependent and distribution-free bounds we could prove
in this paper (the difference between the two families of bounds is whether the constants in the bounds
can depend or not on the unknown distributions $\nu_j$).
\begin{table}[h]
{\footnotesize
\begin{center}
\begin{tabular}{rcccc}
\toprule
 & & \multicolumn{3}{c}{Distribution-dependent}\\
 \cmidrule{3-5}
 & & EDP & EBA & MPA \\ \midrule
Uniform & & & $\bigcirc \, e^{-\bigcirc n}$ \, (Pr.\ref{prop:unif}) \, & \\
 UCB$(\a)$ & & $\bigcirc  (\a \ln n)/n$ \, (Rk.\ref{rk:origine2}) \, & $\bigcirc \, n^{- \bigcirc}$ \, (Rk.\ref{rk:KS}) \, &
 $\bigcirc \, n^{2(1-\a)}$ \, (Th.\ref{th:UCBp-MPA-1}) \, \\
\cmidrule{3-5}
Lower bound & & \multicolumn{3}{c}{$\bigcirc \, e^{-\bigcirc n}$ \, (Cor.\ref{cor:LBunif}) \,} \\
\midrule
 & & \multicolumn{3}{c}{Distribution-free} \\
 \cmidrule{3-5}
 & & EDP & EBA & MPA \\ \midrule
Uniform & & & $\displaystyle{\square \sqrt{\frac{K \ln K}{n}}}$ \, (Cor.\ref{cor:unif2}) \,& \\
 UCB$(\a)$ & & $\displaystyle{\square \sqrt{\frac{\a K \ln n}{n}}}$ \, (Rk.\ref{rk:origine2}) \,
 & $\displaystyle{\frac{\square}{\sqrt{\ln n}}}$ \, (Rk.\ref{rk:KS}) \, & $\displaystyle{\square \sqrt{\frac{\a K \ln n}{n}}}$ \,
 (Th.\ref{th:UCBp-MPA-2}) \, \\
\cmidrule{3-5}
Lower bound & & \multicolumn{3}{c}{$\displaystyle{\square \sqrt{\frac{K}{n}}}$ \, (Rk.\ref{rk:origine})} \, \\
\bottomrule
\end{tabular}
\end{center}
}
\caption{\label{tab:bounds}
Distribution-dependent (top) and distribution-free (bottom) upper bounds on the expected simple
regret of the considered pairs of allocation (rows) and recommendation (columns) strategies.
Lower bounds are also indicated. The $\square$ symbols denote the universal constants,
whereas the $\bigcirc$ are distribution-dependent constants. In parentheses, we provide
the reference within this paper (index of the proposition, theorem, remark, corollary)
where the stated bound is proved.}
\end{table}
It shows that two interesting couples of strategies are, on the one hand,
the uniform allocation together with the choice of the empirical best arm, and on the
other hand, UCB$(\a)$ together with the choice of the most played arm.
The first pair was perhaps expected, the second one might be considered more surprising.

Table~\ref{tab:bounds} also indicates that while for distribution-dependent bounds, the asymptotic optimal rate of decrease for the simple regret
in the number $n$ of rounds is exponential, for distribution-free bounds, this rate worsens to $1/\sqrt{n}$.
A similar situation arises for the cumulative regret, see
\cite{LR85} (optimal $\ln n$ rate for distribution-dependent bounds)
versus \cite{ACFS02} (optimal $\sqrt{n}$ rate for distribution-free bounds).

\begin{remark}
\label{rk:origine}
The distribution-free lower bound in Table~\ref{tab:bounds}
follows from a straightforward adaptation of the proof of the lower bound
on the cumulative regret in \cite{ACFS02}; one can prove that, for $n \geq K \geq 2$,
\[
\inf \sup \E r_n \geq \frac{1}{20} \sqrt{\frac{K}{n}}~,
\]
where the infimum is taken over all forecasters while the supremum
considers all sets of $K$ distributions over $[0,1]$. (The proof uses exactly
the same reduction to a stochastic setting as in \cite{ACFS02}. It is even
simpler than in the indicated reference since here, only what happens at round $n$
based on the information provided by previous rounds is to be considered; in the cumulative
case considered in \cite{ACFS02}, such an analysis had to be made at each round $t \leq n$.)
\end{remark}

\subsection{A simple benchmark: the uniform allocation strategy}
\label{sec:unif}

As explained above, the combination of the uniform allocation with the recommendation indicating the
empirical best arm, forms an important theoretical benchmark. This section
studies briefly its theoretical properties:
the rate of decrease of its simple regret is exponential in a distribution-dependent sense
and equals the optimal (up to a logarithmic term) $1/\sqrt{n}$ rate in the distribution-free case.

Below, we mean by the recommendation given by the empirical best arm at round $K\lfloor n/K \rfloor$
the recommendation $J_{K\lfloor n/K \rfloor}$ of EBA (see Figure~\ref{fig:recomm}),
where $\lfloor x \rfloor$ denotes the lower integer part of a real number $x$. The reason why at round $n$ we prefer
$J_{K\lfloor n/K \rfloor}$ to $J_n$ is only technical. The analysis is indeed simpler
when all averages over the rewards obtained by each arm are over the same number of terms.
This happens at rounds $n$ multiple of $K$ and this is why we prefer taking the recommendation of round
$K\lfloor n/K \rfloor$ instead of the one of round~$n$.

We propose first two distribution-dependent bounds, the first one is sharper
in the case when there are few arms, while the second one is suited for large $K$.

\begin{proposition}[Distribution-dependent; Unif and EBA]
\label{prop:unif}
The uniform allocation strategy associated with the recommendation given by the empirical best arm
(at round $K\lfloor n/K \rfloor$) ensures that
\[
\E r_n \leq \sum_{i : \Delta_i > 0} \Delta_i \, e^{ - \Delta_i^2 \lfloor n/K \rfloor}
\qquad \mbox{
for all} \ n \geq K~;
\]
and also, for all $\eta \in (0,1)$ and all
$\displaystyle{n \geq \max \left\{ K, \,\, \frac{K \ln K}{\eta^2 \Delta^2} \right\} }$,
\[
\E r_n \leq \left( \max_{i = 1,\ldots,K} \Delta_i \right)
\exp \left( - \frac{(1-\eta)^2}{2} \left\lfloor \frac{n}{K} \right\rfloor
\, \Delta^2 \right)~.
\]
\end{proposition}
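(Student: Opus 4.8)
The plan is to use that the recommendation is formed at round $K\lfloor n/K\rfloor$, where the uniform allocation has pulled every arm exactly $m:=\lfloor n/K\rfloor$ times; thus each empirical mean is an average of exactly $m$ independent rewards and, across arms, these averages are mutually independent. Write $\wh\mu_i$ for the empirical mean of arm $i$ at this round, and recall that $r_n=\Delta_{J_n}$ with $\Delta_{i^*}=0$.

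For the first inequality I would start from
\[
\E r_n=\sum_{i:\Delta_i>0}\Delta_i\,\P\{J_n=i\}\leq\sum_{i:\Delta_i>0}\Delta_i\,\P\{\wh\mu_i\geq\wh\mu_{i^*}\},
\]
using that EBA selects a suboptimal $i$ only when its empirical mean dominates that of $i^*$. The heart of the argument is the single concentration estimate $\P\{\wh\mu_i\geq\wh\mu_{i^*}\}\leq e^{-\Delta_i^2 m}$. The point is to obtain the clean constant $1$ in the exponent (rather than the $1/2$ that a two-sided union bound would give), and I would do so by \emph{not} splitting: introduce $U=\sum_{s=1}^m X_{i^*,s}+\sum_{s=1}^m(1-X_{i,s})$, a sum of $2m$ independent variables each in $[0,1]$ with $\E U=m(1+\Delta_i)$, and note that $\{\wh\mu_i\geq\wh\mu_{i^*}\}=\{U\leq m\}$. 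Hoeffding's inequality applied to $U$ with deviation $m\Delta_i$ gives $\P\{U\leq m\}\leq\exp\bigl(-2(m\Delta_i)^2/(2m)\bigr)=e^{-\Delta_i^2 m}$; summing over the suboptimal arms yields the first bound.

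For the second inequality I would bound the regret more crudely by $r_n\leq(\max_i\Delta_i)\,\ds1\{J_n\neq i^*\}$, so that $\E r_n\leq(\max_i\Delta_i)\,\P\{J_n\neq i^*\}$. A union bound over the at most $K-1$ suboptimal arms, the estimate just proved, and $\Delta_i\geq\Delta$ give
\[
\P\{J_n\neq i^*\}\leq\sum_{i\neq i^*}\P\{\wh\mu_i\geq\wh\mu_{i^*}\}\leq(K-1)\,e^{-\Delta^2 m}.
\]
It remains to absorb the factor $K-1$ into the exponent, i.e.\ to show $(K-1)e^{-\Delta^2 m}\leq\exp\bigl(-\tfrac{(1-\eta)^2}{2}\Delta^2 m\bigr)$. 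Taking logarithms, this reduces to $\ln K\leq\bigl(1-\tfrac{(1-\eta)^2}{2}\bigr)\Delta^2 m$, and I would prove it via the elementary inequality $1-\tfrac{(1-\eta)^2}{2}\geq\eta^2$ (equivalently $(1-\eta)(1+3\eta)\geq0$) together with the standing hypothesis, which forces $\Delta^2 m\geq\ln K/\eta^2$.

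The only genuinely delicate point is the concentration constant: getting exactly $e^{-\Delta_i^2 m}$ rather than $2e^{-\Delta_i^2 m/2}$ hinges on treating $\wh\mu_{i^*}-\wh\mu_i$ as a single average of $2m$ variables in $[0,1]$ instead of controlling the two deviations separately. A secondary bookkeeping issue is the floor: the hypothesis bounds $n/K$, whereas the exponent involves $m=\lfloor n/K\rfloor$, so one must check that the slack in $1-\tfrac{(1-\eta)^2}{2}\geq\eta^2$ (strict for $\eta<1$) absorbs the loss incurred in replacing $n/K$ by $m$ and $K$ by $K-1$.
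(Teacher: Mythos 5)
Your proof of the first inequality is essentially the paper's own: the paper likewise reduces to $\P\{\wh{\mu}_{i,n}\geq\wh{\mu}_{i^*,n}\}$ and applies Hoeffding's inequality to the difference viewed as a (normalized) sum of $2\lfloor n/K\rfloor$ independent variables of range $1$, which is exactly your variable $U$ after an affine change; both arguments produce the clean exponent $-\Delta_i^2\lfloor n/K\rfloor$.

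Your proof of the second inequality takes a genuinely different route. The paper does not use a union bound there: it applies McDiarmid's bounded-differences inequality to $\max_{i:\Delta_i>0}\wh{\mu}_{i,n}-\wh{\mu}_{i^*,n}$, controls the expectation of this maximum by $\sqrt{\ln K/\lfloor n/K\rfloor}$ through a subgaussian maximal inequality, and ends with the exponent $\frac{1}{2}\lfloor n/K\rfloor\bigl(\Delta-\sqrt{\ln K/\lfloor n/K\rfloor}\bigr)^2$, which the hypothesis on $n$ converts into $\frac{(1-\eta)^2}{2}\lfloor n/K\rfloor\Delta^2$. Your argument --- union bound giving $(K-1)e^{-\Delta^2 m}$, then absorbing $\ln(K-1)$ into the exponent via $1-\frac{(1-\eta)^2}{2}\geq\eta^2$, i.e. $(1-\eta)(1+3\eta)\geq 0$ --- is more elementary (no McDiarmid, no maximal inequality), and its raw exponent $\Delta^2 m-\ln(K-1)$ is in fact at least twice the paper's $\frac{1}{2}(\sqrt{m}\,\Delta-\sqrt{\ln K})^2$ whenever $\sqrt{m}\,\Delta>\sqrt{\ln K}$; both collapse to the same stated bound. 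Given $m\geq\ln K/(\eta^2\Delta^2)$, your chain closes with no slack needed: $\Delta^2 m\bigl(1-\frac{(1-\eta)^2}{2}\bigr)\geq\eta^2\Delta^2 m\geq\ln K>\ln(K-1)$.

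Two caveats. First, start from $\E r_n\leq(\max_i\Delta_i)\,\P\{\Delta_{J_n}>0\}$ rather than $\P\{J_n\neq i^*\}$: when several optimal arms exist, the event $\{J_n\neq i^*\}$ includes recommendations of other optimal arms, for which your per-arm estimate only gives the trivial bound $1$; restricting the union to suboptimal arms (as your text, but not your displayed chain, does) fixes this. Second, the floor issue you flag at the end does not resolve the way you hope: the slacks $\bigl(1-\frac{(1-\eta)^2}{2}\bigr)-\eta^2$ and $\ln K-\ln(K-1)$ both vanish as $\eta\to1$ and $K\to\infty$, while the loss incurred in replacing $n/K$ by $m=\lfloor n/K\rfloor$ can approach $1$ in the exponent; for instance $K=100$, $\Delta=1$, $\eta=0.999$, $n=462$ satisfies the stated hypothesis yet $\eta^2\Delta^2 m<\ln K$. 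However, this is not a defect of your argument relative to the paper: the paper's own proof requires $\lfloor n/K\rfloor\geq\ln K/(\eta^2\Delta^2)$ and asserts, equally loosely, that the hypothesis $n\geq K\ln K/(\eta^2\Delta^2)$ provides it. The imprecision sits in the proposition's hypothesis (it should constrain $\lfloor n/K\rfloor$ rather than $n/K$), and under that reading both your proof and the paper's are complete.
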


\begin{proof}
To prove the first inequality, we relate the simple regret to the probability
of choosing a non-optimal arm,
\[
\E r_n = \E \Delta_{J_n} = \sum_{i : \Delta_i > 0} \Delta_i \,  \P\{J_n = i\}
\leq \sum_{i : \Delta_i > 0} \Delta_i \, \P \bigl\{ \wh{\mu}_{i,n} \geq \wh{\mu}_{i^*,n} \bigr\}
\]
where the upper bound follows from the fact that to be the empirical best arm,
an arm $i$ must have performed, in particular, better than a best arm $i^*$.
We now apply Hoeffding's inequality for independent bounded random variables, 
see~\cite{Hoe63}. The quantities $\wh{\mu}_{i,n} - \wh{\mu}_{i^*,n}$ are given by 
a (normalized) sum of $2 \lfloor n/K \rfloor$ random variables taking values in $[0,1]$ or in $[-1,0]$
and have expectation $- \Delta_i$.
Thus, the probability of interest is bounded by
\begin{multline}
\nonumber
\P \bigl\{ \wh{\mu}_{i,n} - \wh{\mu}_{i^*,n} \geq 0 \bigr\}
= \P \Bigl\{ \bigl( \wh{\mu}_{i,n} - \wh{\mu}_{i^*,n} \bigr) - \bigl( - \Delta_i \bigr)
\geq \Delta_i \Bigr\} \\
\leq \exp \left( - \frac{ 2 \, \Bigl( \left\lfloor n/K \right\rfloor \Delta_i \Bigr)^2}{2 \left\lfloor n/K \right\rfloor} \right)
= \exp \left( - \left\lfloor \frac{n}{K} \right\rfloor \Delta_i^2 \right)~,
\end{multline}
which yields the first result.

The second inequality is proved by resorting to a sharper concentration argument,
namely, the method of bounded differences, see \cite{McD89}, see also
\cite[Chapter~2]{DL01}. The complete proof can be found in Section~\ref{sec:app1}.
\end{proof}

The distribution-free bound of Corollary~\ref{cor:unif2}
is obtained not directly as a corollary of Proposition~\ref{prop:unif},
but as a consequence of its proof.
(It is not enough to optimize the bound of Proposition~\ref{prop:unif} over the $\Delta_i$,
for it would yield an additional multiplicative factor of $K$.)

\begin{corollary}[Distribution-free; Unif and EBA]
\label{cor:unif2}
The uniform allocation strategy associated with the recommendation given by the empirical best arm
(at round $K\lfloor n/K \rfloor$) ensures that
\[
\sup_{\nu_1,\ldots,\nu_K} \, \E r_n \leq 2 \,\sqrt{\frac{K \ln K}{n + K}}~,
\]
where the supremum is over all $K$--tuples $(\nu_1,\ldots,\nu_K)$ of distributions over $[0,1]$.
\end{corollary}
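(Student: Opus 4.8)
The plan is to reopen the proof of Proposition~\ref{prop:unif} rather than to start from its final, already-summed bound. Writing $m = \lfloor n/K \rfloor$ for the common number of pulls of each arm at the recommendation round, that proof establishes the pointwise control
\[
\P\{J_n = i\} \leq \P\bigl\{ \wh{\mu}_{i,n} \geq \wh{\mu}_{i^*,n} \bigr\} \leq e^{-\Delta_i^2 m}
\]
for every suboptimal arm $i$, obtained from Hoeffding's inequality. I would keep exactly this per-arm estimate, but recombine the terms so as to exploit that the simple regret $r_n = \Delta_{J_n}$ is the gap of a \emph{single} recommended arm. Crucially, the threshold $\delta$ introduced below will be chosen independently of the unknown gaps, so the resulting bound holds uniformly over all $K$--tuples $(\nu_1,\ldots,\nu_K)$ and thus controls the supremum.

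First I would fix a threshold $\delta > 0$ and split according to the gap actually incurred,
\[
\E r_n = \E\bigl[ \Delta_{J_n} \ds1\{\Delta_{J_n} \leq \delta\} \bigr] + \E\bigl[ \Delta_{J_n} \ds1\{\Delta_{J_n} > \delta\} \bigr]~.
\]
The first term is at most $\delta$, simply because on that event the recommended arm has gap at most $\delta$. This is the decisive point, and the reason the argument must be run inside the proof and not on the statement of Proposition~\ref{prop:unif}: the summed bound $\sum_i \Delta_i e^{-\Delta_i^2 m}$ would force one to add up to $K-1$ near-optimal contributions of size $\delta$, producing exactly the spurious factor $K$ flagged after the proposition. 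Splitting on the value of $\Delta_{J_n}$ instead charges all the near-optimal arms a single $\delta$, and this is where the factor $K$ is saved.

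For the second term I would bound $\P\{J_n = i\} \leq e^{-\Delta_i^2 m}$ as above, so that
\[
\E\bigl[ \Delta_{J_n} \ds1\{\Delta_{J_n} > \delta\} \bigr] \leq \sum_{i : \Delta_i > \delta} \Delta_i \, e^{-\Delta_i^2 m}~.
\]
Provided $\delta \geq 1/\sqrt{2m}$, the map $x \mapsto x\, e^{-x^2 m}$ is nonincreasing on $[\delta,\infty)$, hence each surviving term is at most $\delta\, e^{-\delta^2 m}$ and the whole sum is at most $(K-1)\,\delta\, e^{-\delta^2 m}$, giving $\E r_n \leq \delta + (K-1)\,\delta\, e^{-\delta^2 m}$. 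It then remains to optimize this over $\delta$ (a near-balancing choice of order $\sqrt{\ln K / m}$ makes $(K-1)e^{-\delta^2 m}$ of constant order) and to translate $m = \lfloor n/K \rfloor$ into the stated denominator $n+K$ and constant $2$; this last part is routine bookkeeping, using $\ln(K-1) \leq \ln K$, elementary lower bounds on $\lfloor n/K \rfloor$, and the trivial bound $r_n \leq 1$ for the small-$n$ cases where the claimed right-hand side exceeds one. The main obstacle is not any of these computations but the structural observation of the previous paragraph: a direct optimization of the proposition's bound loses a factor of order $K$, and the entire gain comes from peeling at the level of the recommended arm's gap and then tuning $\delta$ so that the exponential absorbs the number of arms.
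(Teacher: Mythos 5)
Your proposal is correct and follows essentially the same route as the paper's own proof: the paper likewise extracts the per-arm bound $\P\{J_n = i\} \leq \exp\bigl(-\lfloor n/K\rfloor \Delta_i^2\bigr)$ from the proof of Proposition~\ref{prop:unif}, splits $\E r_n = \sum_i \Delta_i \, \P\{J_n = i\}$ at a gap threshold $\epsilon$ (charging $\epsilon$ only once, via $\sum_i \P\{J_n = i\} = 1$), invokes the monotonicity of $x \mapsto x\,e^{-Cx^2}$ beyond $1/\sqrt{2C}$, and substitutes $\epsilon = \sqrt{(\ln K)/\lfloor n/K\rfloor}$. Your event-based split on $\{\Delta_{J_n} \leq \delta\}$ is the identical maneuver phrased probabilistically, and your closing remarks on the floor-function bookkeeping and the trivial bound $r_n \leq 1$ are, if anything, more explicit than the paper's one-line conclusion.
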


\begin{proof}
We extract from the proof of Proposition~\ref{prop:unif} that
\[
\P \{ J_n = i \} \leq \exp \left( - \left\lfloor \frac{n}{K} \right\rfloor \Delta_i^2 \right)~;
\]
we now distinguish whether a given $\Delta_i$ is more or less than a threshold
$\epsilon$, use that $\sum \P \{ J_n = i \} = 1$ and $\Delta_i \leq 1$ for all $i$, to write
\begin{eqnarray}
\E r_n = \sum_{i=1}^K \, \Delta_i \, \P\{J_n=i\}
\label{eq:infeps}
& \leq & \epsilon + \sum_{i : \Delta_i > \epsilon} \Delta_i \, \P\{J_n=i\} \\
\nonumber
& \leq & \epsilon + \sum_{i : \Delta_i > \epsilon} \Delta_i \, \exp \left( - \left\lfloor \frac{n}{K}
\right\rfloor \Delta_i^2 \right)~.
\end{eqnarray}
A simple study shows that
the function $x \in [0,1] \mapsto x \, \exp (- C x^2)$ is decreasing on $\bigl[ 1/\sqrt{2C},\,1 \bigr]$,
for any $C > 0$.
Therefore, taking $C = \lfloor n/K \rfloor$, we get that
whenever $\epsilon \geq 1 \big/ \sqrt{2 \lfloor n/K \rfloor}$,
\[
\E r_n \leq \epsilon + (K-1) \, \epsilon \, \exp \left( - \epsilon^2
\left\lfloor \frac{n}{K} \right\rfloor \right)~.
\]
Substituting $\epsilon = \sqrt{(\ln K)/\lfloor n/K \rfloor}$ concludes the proof.
\end{proof}

\subsection{Analysis of UCB$(\a)$ as an allocation strategy}

We start by studying the recommendation given by the most played arm.
A (distribution-dependent) bound is stated in Theorem~\ref{th:UCBp-MPA-1}; the bound does not involve any quantity depending on the
$\Delta_i$, but it only holds for rounds $n$ large enough, a statement that does involve the $\Delta_i$.
Its interest is first that it is simple to read, and second,
that the techniques used to prove it imply easily a second (distribution-free) bound, stated in Theorem~\ref{th:UCBp-MPA-2}
and which is comparable to Corollary~\ref{cor:unif2}.

\begin{theorem}[Distribution-dependent; UCB$(\a)$ and MPA]
\label{th:UCBp-MPA-1}
For $\a > 1$, the allocation strategy given by UCB$(\a)$ associated with the recommendation given by the most played arm
ensures that
\[
\E r_n \leq \frac{K}{\a-1} \, \left( \frac{n}{K} -1 \right)^{2(1-\a)}
\]
for all $n$ sufficiently large, {e.g.}, such that
$\displaystyle{n \geq K + \frac{4 K \a \ln n}{\Delta^2}}$ and $n \geq K(K+2)$.
\end{theorem}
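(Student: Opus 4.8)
The plan is to reduce the simple regret to the probability that the most-played arm is suboptimal, and then to show that UCB$(\alpha)$ over-pulls a suboptimal arm only with polynomially small probability. First I would write $\E r_n = \sum_{i : \Delta_i > 0} \Delta_i \, \P\{J_n = i\}$ (since $r_n = \Delta_{J_n}$), and observe that because MPA recommends an arm achieving $\max_j T_j(n)$, the recommended arm is always pulled at least the average number of times; hence $\{J_n = i\} \subseteq \{T_i(n) \geq n/K\}$ for every suboptimal $i$, and it suffices to bound $\P\{T_i(n) \geq n/K\}$.

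Next I would set $u = \lceil n/K \rceil$ and exploit the disjointness of the events ``arm $i$ is pulled for the $u$-th time at round $t$''. On $\{T_i(n) \geq n/K\}$ one such round $t \in \{u,\ldots,n\}$ exists, and there $I_t = i$ with $T_i(t-1) = u-1$, so that $B_{i,t} \geq B_{i^*,t}$. The hypothesis $n \geq K + 4K\alpha(\ln n)/\Delta^2$ guarantees $u-1 \geq 4\alpha(\ln n)/\Delta_i^2 \geq 4\alpha(\ln t)/\Delta_i^2$ (using $\Delta \leq \Delta_i$ and $t \leq n$), so the confidence bonus of arm $i$ at this round is at most $\Delta_i/2$. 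Consequently $B_{i,t} \geq B_{i^*,t}$ forces at least one of two deviations: either the optimal arm is underestimated, $\wh{\mu}_{i^*,t-1} + \sqrt{(\alpha \ln t)/T_{i^*}(t-1)} < \mu^*$, or the suboptimal arm is overestimated, $\wh{\mu}_{i,t-1} \geq \mu_i + \Delta_i/2$; otherwise the two upper bounds would straddle $\mu^* = \mu_i + \Delta_i$, a contradiction.

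Then I would bound each deviation by Hoeffding's inequality \cite{Hoe63}, uniformly over the unknown sample count: for a fixed round $t$, a union bound over the possible values $s = T_{i^*}(t-1) \in \{1,\ldots,t-1\}$ gives $\P\{\text{optimal arm underestimated}\} \leq t \cdot t^{-2\alpha} = t^{1-2\alpha}$, while the overestimation event contributes only a geometric tail dominated by $\exp\!\big(-(u-1)\Delta_i^2/2\big) \leq n^{-2\alpha}$ per round and is negligible. Summing the dominant term over the round $t$ of the $u$-th pull, which ranges in $\{u,\ldots,n\}$, yields
\[
\P\{T_i(n) \geq n/K\} \leq \sum_{t \geq u} t^{1-2\alpha} + (\text{negligible}) \leq \frac{(u-1)^{2(1-\alpha)}}{2(\alpha-1)} + (\text{negligible}),
\]
and since $u-1 \geq n/K - 1$, this is of order $(n/K-1)^{2(1-\alpha)}/\big(2(\alpha-1)\big)$. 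Plugging back, bounding $\Delta_i \leq 1$ and the number of suboptimal arms by $K$, and absorbing the negligible terms into the constant, gives the claimed $\E r_n \leq \frac{K}{\alpha-1}(n/K-1)^{2(1-\alpha)}$; the secondary condition $n \geq K(K+2)$ is used only to make the rounding manipulations and the domination of the negligible terms go through.

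The main obstacle is getting the correct $n$-dependence rather than an $O(1)$ constant: a naive union bound of the UCB-failure event over \emph{all} rounds $t \leq n$ sums $\sum_t t^{1-2\alpha}$, which merely converges to a constant and gives no decay. The decay must instead come from restricting the round $t$ of the $u$-th pull to the range $[\lceil n/K\rceil, n]$ — legitimate precisely because over-pulling arm $i$ to level $n/K$ cannot occur before round $n/K$ — so that the tail sum starts at $u \approx n/K$ and produces the factor $(n/K-1)^{2(1-\alpha)}$. Correctly coupling this over-pulling threshold (which is what kills the confidence bonus of arm $i$) with the UCB-failure decomposition is the delicate part of the argument.
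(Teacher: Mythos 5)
Your proof is correct and follows essentially the same route as the paper's: the paper also reduces $\E r_n$ to $\P\{T_i(n)\geq n/K\}$ via the pigeonhole property of MPA (through Lemma~\ref{lm:UCBp-MPA} with weights $a_i=1/K$) and then sums the per-round probability of pulling an over-sampled suboptimal arm over rounds $t\in\{\lceil n/K\rceil,\ldots,n\}$, exactly as you do in~(\ref{eq:controlMPA2}). The only difference is that the paper invokes inequality~(\ref{eq:inegextr}) as a cited side-result of \cite{AMS09,ACF02}, whereas you re-derive it inline via the standard underestimation/overestimation decomposition and Hoeffding bounds; this is a faithful reconstruction of that citation, not a different argument.
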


The polynomial rate in the upper bound above is not a coincidence according to the lower bound exhibited in Corollary~\ref{cor:LBUCBp}.
Here, surprisingly enough, this polynomial rate of decrease is distribution-free
(but in compensation, the bound is only valid after a distribution-dependent time).
This rate illustrates Theorem~\ref{th:MainDistDep}: the larger $\a$, the larger
the (theoretical bound on the) cumulative regret of UCB$(\a)$
but the smaller the simple regret of UCB$(\a)$ associated with the recommendation given by the most played
arm.

\begin{theorem}[Distribution-free; UCB$(\a)$ and MPA]
\label{th:UCBp-MPA-2}
For $\a > 1$, the allocation strategy given by UCB$(\a)$ associated with the recommendation given by the most played arm
ensures that, for all $n \geq K(K+2)$,
\[
\sup_{\nu_1,\ldots,\nu_K} \,
\E r_n \leq \sqrt{\frac{4 K \a \ln n}{n-K}} + \frac{K}{\a-1} \, \left( \frac{n}{K} -1 \right)^{2(1-\a)}
= O \! \left(\sqrt{\frac{K \a \ln n}{n}} \right)~,
\]
where the supremum is over all $K$--tuples $(\nu_1,\ldots,\nu_K)$ of distributions over $[0,1]$.
\end{theorem}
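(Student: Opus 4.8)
The plan is to mirror the threshold argument used for Corollary~\ref{cor:unif2}, reusing as a black box the per-arm tail estimate established inside the proof of Theorem~\ref{th:UCBp-MPA-1}. Writing the simple regret as $\E r_n = \sum_{i : \Delta_i > 0} \Delta_i \, \P\{J_n = i\}$, I would fix a threshold $\epsilon > 0$ and split the sum according to whether $\Delta_i \leq \epsilon$ or $\Delta_i > \epsilon$. The small-gap arms contribute at most $\epsilon$ in total (since $\sum_i \P\{J_n = i\} = 1$), while on the large-gap arms I bound $\Delta_i \leq 1$, so that
\[
\E r_n \;\leq\; \epsilon \;+\; \sum_{i : \Delta_i > \epsilon} \P\{J_n = i\}~.
\]
It then remains to control the large-gap arms by a distribution-free polynomial bound.

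The key input borrowed from the proof of Theorem~\ref{th:UCBp-MPA-1} is the following per-arm estimate. Under MPA the event $\{J_n = i\}$ forces arm $i$ to be the most played one, hence $T_i(n) \geq n/K$; and the UCB$(\a)$ deviation analysis shows that, as soon as $n/K - 1 \geq (4\a \ln n)/\Delta_i^2$~--- equivalently $n \geq K + (4 K \a \ln n)/\Delta_i^2$~--- one has the \emph{distribution-free} tail bound
\[
\P\{J_n = i\} \;\leq\; \P\bigl\{ T_i(n) \geq n/K \bigr\} \;\leq\; \frac{1}{\a - 1} \left( \frac{n}{K} - 1 \right)^{2(1-\a)}~.
\]
Summing this over the at most $K$ suboptimal arms (all of which meet the condition when $n \geq K + (4K\a \ln n)/\Delta^2$) is exactly how Theorem~\ref{th:UCBp-MPA-1} is obtained; here I need it only for the arms with $\Delta_i > \epsilon$.

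The whole point is then the choice of $\epsilon$. Taking $\epsilon = \sqrt{(4 K \a \ln n)/(n-K)}$, an arm with $\Delta_i > \epsilon$ satisfies $\Delta_i^2 > (4 K \a \ln n)/(n-K)$, i.e.\ $n > K + (4 K \a \ln n)/\Delta_i^2$, so the per-arm estimate applies to each such arm. Bounding their number by $K$ yields
\[
\E r_n \;\leq\; \sqrt{\frac{4 K \a \ln n}{n-K}} \;+\; \frac{K}{\a - 1} \left( \frac{n}{K} - 1 \right)^{2(1-\a)}~,
\]
which is the announced inequality. For the $O\bigl(\sqrt{K \a \ln n / n}\bigr)$ simplification, note that $n \geq K(K+2)$ makes $n-K$ a fixed fraction of $n$, so the first term is of that order, while the second is a pure polynomial of order $n^{-2(\a-1)}$ and is dominated by the first term in the regime where the bound is informative (namely $\a$ not too close to $1$, which is precisely where Theorem~\ref{th:UCBp-MPA-1} makes the simple regret small).

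The genuinely delicate step is the per-arm tail bound inherited from Theorem~\ref{th:UCBp-MPA-1}: one must show that once arm $i$ has been pulled beyond its logarithmic ``budget'' $(4\a \ln n)/\Delta_i^2$, the confidence intervals of UCB$(\a)$ make further pulls so unlikely that the probability of ever reaching the count $n/K$ decays polynomially and, crucially, \emph{free of $\Delta_i$}. This is where Hoeffding's inequality together with a peeling/union bound over the possible values of $T_i(\cdot)$ does the real work. The distribution-free argument proper is then just the elementary thresholding above, exactly as in the passage from Proposition~\ref{prop:unif} to Corollary~\ref{cor:unif2}; the only new bookkeeping is checking that the threshold $\epsilon$ is tuned so that $\Delta_i > \epsilon$ coincides with the validity condition of the polynomial tail.
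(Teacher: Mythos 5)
Your proposal is correct and is essentially identical to the paper's own proof: the same thresholding decomposition of $\E r_n$, the same observation that $J_n = i$ forces $T_i(n) \geq n/K$ so that the tail bound~(\ref{eq:controlMPA2}) with $a_i = 1/K$ applies to every arm with $\Delta_i > \epsilon$, and the same choice $\epsilon = \sqrt{(4 K \a \ln n)/(n-K)}$ tuned precisely so that the validity condition $n/K - 1 \geq (4\a \ln n)/\Delta_i^2$ holds for those arms. Your added caveat that the $O\bigl(\sqrt{K\a\ln n/n}\bigr)$ simplification requires $\a$ bounded away from $1$ (so the polynomial term does not dominate) is a fair refinement the paper glosses over.
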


\subsubsection{Proofs of Theorems~\ref{th:UCBp-MPA-1} and \ref{th:UCBp-MPA-2}}

We start by a technical lemma from which the two theorems will follow easily.

\begin{lemma}
\label{lm:UCBp-MPA}
Let $a_1,\ldots,a_K$ be real numbers such that $a_1 + \ldots + a_K = 1$ and $a_i \geq 0$ for all $i$, with
the additional property that for all suboptimal arms $i$ and all optimal arms $i^*$, one has $a_i \leq a_{i^*}$. Then for $\a > 1$, the allocation strategy given by UCB$(\a)$ associated with the recommendation given by the most played arm ensures that
\[
\E r_n \leq \frac{1}{\a-1} \sum_{i \ne i^*} (a_i n -1)^{2(1-\a)}
\]
for all $n$ sufficiently large, {e.g.}, such that, for all suboptimal arms $i$,
\[
a_i n \geq 1+\frac{4 \a \ln n}{\Delta_i^2} \quad \mbox{and} \quad
a_i n \geq K+2~.
\]
\end{lemma}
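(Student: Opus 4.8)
The plan is to pass from the simple regret to the probability of recommending a suboptimal arm, and then to exploit that under UCB$(\a)$ every suboptimal arm is pulled only of order $\ln n$ times, so that it cannot be the most played arm once $n$ is large. Since all rewards lie in $[0,1]$ we have $\Delta_i\leq1$, hence
\[
\E r_n=\sum_{i\neq i^*}\Delta_i\,\P\{J_n=i\}\leq\P\{J_n\neq i^*\},
\]
and it suffices to control $\P\{J_n\neq i^*\}$, i.e.\ the probability that the most played arm is suboptimal. Note that this already explains why no $\Delta_i$ survives in the final bound.

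First I would establish the set inclusion
\[
\{J_n\neq i^*\}\subseteq\bigcup_{i\neq i^*}\{T_i(n)\geq a_i n\}.
\]
If the most played arm $i$ is suboptimal then $T_i(n)\geq T_{i^*}(n)$. Either $T_i(n)\geq a_i n$ directly, or else $T_i(n)<a_i n$, in which case $T_{i^*}(n)\leq T_i(n)<a_i n\leq a_{i^*}n$ by the assumption $a_i\leq a_{i^*}$, so the best arm is under-pulled. The identity $\sum_j T_j(n)=n$ then forces some suboptimal arm $j$ to satisfy $T_j(n)>a_j n$, because $\sum_{j\neq i^*}T_j(n)=n-T_{i^*}(n)>n(1-a_{i^*})=\sum_{j\neq i^*}a_j n$. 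In both cases some suboptimal count exceeds its threshold, which is the claimed inclusion. (When several optimal arms exist the same argument applies verbatim, every best arm being under-pulled simultaneously.)

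The core of the proof is then the tail bound
\[
\P\{T_i(n)\geq a_i n\}\leq\frac{1}{\a-1}\,(a_i n-1)^{2(1-\a)}
\]
for each suboptimal arm $i$, whence the statement follows by summing over $i\neq i^*$ and combining with the reduction above. Writing $u=\lceil a_i n\rceil$, the event $\{T_i(n)\geq a_i n\}$ means that at some round $t\in\{u,\ldots,n\}$ arm $i$ is pulled for the $u$-th time, so that $I_t=i$ while $T_i(t-1)=u-1$. The condition $a_i n\geq1+4\a\ln n/\Delta_i^2$ guarantees $u-1\geq 4\a\ln t/\Delta_i^2$, so the confidence width of arm $i$ satisfies $\sqrt{\a\ln t/T_i(t-1)}\leq\Delta_i/2$; consequently $I_t=i$ forces either $\wh\mu_{i,u-1}\geq\mu_i+\Delta_i/2$ or $B_{i^*,t}\leq\mu^*$. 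I would bound the first event once, by Hoeffding's inequality applied to the fixed number $u-1$ of samples, giving at most $n^{-2\a}$, and the second by a union bound over the $T_{i^*}(t-1)$ possible sample sizes followed by Hoeffding, giving at most $t^{1-2\a}$. Summing the latter over $t\geq u$ and comparing with $\int_{u-1}^{\infty}x^{1-2\a}\,dx=(u-1)^{2(1-\a)}/(2(\a-1))$ produces the dominant term, the term $n^{-2\a}$ being of strictly smaller order; the remaining condition $a_i n\geq K+2$ ensures the relevant rounds lie past the initialization phase and lets the constants be absorbed into the announced factor $1/(\a-1)$.

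The only genuinely delicate step is this last tail estimate: one must handle the loss of optimism of the best arm and the over-performance of arm $i$ uniformly in $t$, and then bookkeep the constants so that the two contributions collapse to exactly $\frac{1}{\a-1}(a_i n-1)^{2(1-\a)}$ rather than a larger multiple. By comparison, the reduction to $\P\{J_n\neq i^*\}$ and the use of $\sum_j T_j(n)=n$ are elementary.
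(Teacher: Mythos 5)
Your proposal is correct and follows essentially the same route as the paper's proof: the same combinatorial argument (yours direct, the paper's by contraposition) reduces the simple regret to $\sum_{i \neq i^*} \P\bigl\{T_i(n) \geq a_i n\bigr\}$, and the same decomposition over the round at which arm $i$ reaches $\lceil a_i n \rceil$ pulls, followed by an integral comparison, controls each term. The one difference is that the paper simply imports the per-round estimate (\ref{eq:inegextr}) from the proofs in \cite{AMS09,ACF02}, whereas you re-derive it from scratch (Hoeffding's inequality for the fixed sample size $\lceil a_i n\rceil - 1$ of arm $i$, applied once globally, plus a union bound over the possible sample sizes of an optimal arm, applied per round); this is valid and makes the argument self-contained, and your constant bookkeeping does close, since the hypotheses force $n \geq 4\a$, so the one-time $n^{-2\a}$ term is indeed absorbed into the stated factor $1/(\a-1)$.
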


\begin{proof}
We first prove that whenever the most played arm $J_n$ is
different from an optimal arm $i^*$, then
at least one of the suboptimal arms $i$ is such that
$T_{i}(n) \geq a_i n$.
To do so, we use a contrapositive method and assume that
$T_{i}(n) < a_i n$ for all suboptimal arms. Then,
\[
\left( \sum_{i=1}^K a_i \right) n = n = \sum_{i = 1}^K T_i(n)
< \sum_{i^*} T_{i^*}(n) + \sum_{i} a_i n
\]
where, in the inequality, the first summation is over the optimal arms, the second one,
over the suboptimal ones.
Therefore, we get
\[
\sum_{i^*} a_{i^*} n < \sum_{i^*} T_{i^*}(n)
\]
and there exists at least one optimal arm $i^*$ such that
$T_{i^*}(n) > a_{i^*} n$. Since by definition of the vector $(a_1,\ldots,a_K)$,
one has $a_{i} \leq a_{i^*}$ for all suboptimal arms,
it comes that $T_i(n) < a_i n \leq a_{i^*} n < T_{i^*}(n)$ for all suboptimal arms,
and the most played arm $J_n$ is thus an optimal arm.

Thus, using that $\Delta_i \leq 1$ for all $i$,
\[
\E r_n = \E \Delta_{J_n} \leq
\sum_{i : \Delta_i > 0} \P \bigl\{ T_{i}(n) \geq a_i n \bigr\}~.
\]
A side-result extracted from \cite[proof of Theorem~7]{AMS09}, see also \cite[proof of Theorem~1]{ACF02},
states that for all suboptimal arms $i$ and all rounds $t \geq K+1$,
\begin{equation}
\label{eq:inegextr}
\P \Bigl\{ I_t = i \ \,\, \mbox{and} \,\, \ T_{i}(t-1) \geq \ell \Bigr\} \leq 2\,t^{1-2\a} \qquad
\mbox{whenever} \qquad \ell \geq \frac{4\a \ln n}{\Delta_i^2}~.
\end{equation}
We denote by $\lceil x \rceil$ the upper integer part of a real number $x$.
For a suboptimal arm $i$ and since by the assumptions on $n$ and the $a_i$, the choice
$\ell = \lceil a_i n \rceil - 1$ satisfies $\ell \geq K+1$ and $\ell \geq (4 \a \ln n)/\Delta_i^2$,
\begin{eqnarray}
\nonumber
\lefteqn{ \P \bigl\{ T_{i}(n) \geq a_i n \bigr\}
= \P \bigl\{ T_{i}(n) \geq \lceil a_i n \rceil \bigr\} } \\
\nonumber
& \leq & \sum_{t = \lceil a_i n \rceil}^{n} \P \Bigl\{ T_{i}(t-1) = \lceil a_i n \rceil - 1 \ \ \mbox{and} \ \ I_{t} = i \Bigr\} \\
\label{eq:controlMPA2}
& \leq & \sum_{t = \lceil a_i n \rceil}^{n} 2\,t^{1-2 \a} \leq 2 \int_{\lceil a_i n \rceil - 1}^\infty v^{1-2 \a}\,\mbox{d}v
\leq \frac{1}{\a-1} (a_i n - 1)^{2(1-\a)}~,
\end{eqnarray}
where we used a union bound for the second inequality and (\ref{eq:inegextr}) for the third inequality.
A summation over all suboptimal arms $i$ concludes the proof.
\end{proof}

\begin{proof}[of Theorem~\ref{th:UCBp-MPA-1}]
It consists in applying Lemma~\ref{lm:UCBp-MPA}
with the uniform choice $a_i = 1/K$
and recalling that $\Delta$ is the minimum of the $\Delta_i > 0$.
\end{proof}

\begin{proof}[of Theorem~\ref{th:UCBp-MPA-2}]
We start the proof by
using that $\sum \P \{ J_n=i \} = 1$ and $\Delta_i \leq 1$ for all $i$, and can thus write
\[
\E r_n = \E \Delta_{J_n} = \sum_{i=1}^K \, \Delta_i \, \P\{J_n=i\}
\leq \varepsilon + \sum_{i : \Delta_i > \varepsilon} \, \Delta_i \, \P\{J_n=i\}~.
\]
Since $J_n = i$ only if $T_i(n) \geq n/K$,
we get
\[
\E r_n \leq \varepsilon + \sum_{i : \Delta_i > \varepsilon}
\Delta_i \, \P \! \left\{ T_{i}(n) \geq \frac{n}{K} \right\}~.
\]
Applying (\ref{eq:controlMPA2}) with $a_i = 1/K$ leads to
$$\E r_n \leq \varepsilon + \sum_{i : \Delta_i > \varepsilon} \ \,\, \frac{\Delta_i}{\a-1} \,
\left( \frac{n}{K} - 1 \right)^{2(1-\a)}~,
$$
where $\varepsilon$ is chosen such that for all $\Delta_i > \varepsilon$, the condition
\[
\ell \geq n/K - 1 \geq (4\a \ln n)/\Delta_i^2
\]
is satisfied ($n/K - 1 \geq K+1$ being satisfied
by the assumption on $n$ and $K$). The conclusion thus follows
from taking, for instance,
\[
\varepsilon = \sqrt{(4 \a K \ln n)/(n-K)}
\]
and upper bounding all remaining $\Delta_i$ by 1.
\end{proof}

\subsubsection{Other recommendation strategies}

We discuss here the combination of UCB$(\a)$ with the two other recommendation strategies,
namely, the choice of the empirical best arm and the use of the empirical distribution of plays.

\begin{remark}[UCB$(\a)$ and EDP]
\label{rk:origine2}
We indicate in this remark from which results the corresponding bounds
of Table~\ref{tab:bounds} follow.
As noticed in the beginning of Section~\ref{sec:smallbad},
in the case of a recommendation formed by the empirical distribution of plays,
the simple regret is bounded in terms of the cumulative regret as
$\E r_n \leq \E R_n/n$. Now, the results in \cite{ACF02,AMS09} indicate that the cumulative regret
of UCB$(\a)$ is less than something of the form
\[
\bigcirc\, \a \ln n + \frac{3K}{2} + \frac{K}{2(\a-1)}~,
\]
where $\bigcirc$ denotes a constant dependent on $\nu_1,\ldots,\nu_K$.
The distribution-free bound on $\E R_n$ (and thus on $\E r_n$) follows
from the control, yielded by (\ref{eq:inegextr})
and a summation,
\[
\E T_i(n) \leq \frac{4 \a \ln n}{\Delta_i^2} + \frac{3}{2} + \frac{1}{2(\a-1)}~,
\]
together with the concavity argument
\begin{multline}
\nonumber
\E R_n = \sum_{i: \Delta_i > 0} \Delta_i \, \E T_i(n)
= \sum_{i: \Delta_i > 0} \left( \Delta_i \, \sqrt{\E T_i(n)} \right) \sqrt{\E T_i(n)} \\
\leq \sqrt{4 \a \ln n + \frac{3}{2} + \frac{1}{2(\a-1)}} \sum_{i: \Delta_i > 0} \sqrt{\E T_i(n)}
\leq \sqrt{\left(4 \a\ln n + \frac{3}{2} + \frac{1}{2(\a-1)} \right) Kn}~,
\end{multline}
where Jensen's inequality guaranteed that $\sum \sqrt{\E T_i(n)} \leq \sqrt{Kn}$.
\end{remark}

\begin{remark}[UCB$(\a)$ and EBA]
\label{rk:KS}
We can rephrase the results of \cite{KS06} as using UCB1 as an allocation strategy
and forming a recommendation according to the empirical best arm. In particular,
\cite[Theorem 5]{KS06} provides a distribution-dependent bound on the probability
of not picking the best arm with this procedure and can be used to derive the following bound on the simple
regret of UCB$(\a)$ combined with EBA: for all $n \geq 1$,
\[
\E r_n \leq \sum_{i : \Delta_i > 0} \frac{4}{\Delta_i} \, \left( \frac{1}{n} \right)^{\rho_{\a} \Delta_i^2/2}
\]
where $\rho_{\a}$ is a positive constant depending on $\a$ only. The leading constants $1/\Delta_i$
and the distribution-dependent exponent make it not as useful as the one
presented in Theorem~\ref{th:UCBp-MPA-1}.
The best distribution-free bound we could get from this bound was of the order of $1/\sqrt{\rho_{\a} \ln n}$,
to be compared to the asymptotic optimal $1/\sqrt{n}$ rate stated in Theorem~\ref{th:UCBp-MPA-2}.
\end{remark}

\section{Conclusions for the case of finitely many arms: Comparison of the bounds, simulation study}
\label{sec:comparison}

We first explain why, in some cases, the bound provided
by our theoretical analysis in Lemma~\ref{lm:UCBp-MPA} (for UCB$(\a)$ and MPA)
is better than the bound stated in Proposition~\ref{prop:unif} (for Unif and EBA).
The central point in the argument is that the bound of
Lemma~\ref{lm:UCBp-MPA} is of the form $\bigcirc \, n^{2(1-\a)}$,
for some distribution-dependent constant $\bigcirc$,
that is, it has a distribution-free convergence rate. In comparison,
the bound of Proposition~\ref{prop:unif} involves the gaps $\Delta_i$
in the rate of convergence.
Some care is needed in the comparison, since the bound for UCB$(\a)$ holds only
for $n$ large enough, but it is easy to find situations where for moderate values of $n$,
the bound exhibited for the sampling with UCB$(\a)$ is better than the one for the uniform
allocation. These situations typically involve a rather large number $K$ of arms;
in the latter case, the uniform allocation strategy only samples $\lfloor n/K\rfloor$ times each arm,
whereas the UCB strategy focuses rapidly its exploration on the best arms.
A general argument is proposed in Section~\ref{sec:app2} as well as a numerical example, showing that
for moderate values of $n$, the bounds associated with the sampling with UCB$(\a)$ are better than the ones
associated with the uniform sampling. This is further illustrated numerically, in the right part of
Figure~\ref{fig:1}).

\bigskip
To make short the longer story described in this paper, one can distinguish three regimes, according to the value of
the number of rounds $n$. The statements of these regimes (the ranges of their corresponding $n$)
involve distribution-dependent quantifications, to determine which $n$ are considered small, moderate, or large.
\begin{itemize}
\item For large values of $n$, uniform exploration is better
(as shown by a combination of the lower bound of Corollary~\ref{cor:LBUCBp} and of the upper bound of
Proposition~\ref{prop:unif}).
\item For moderate values of $n$, sampling with UCB$(\a)$ is preferable, as discussed just above (and in Section~\ref{sec:app2}).
\item For small values of $n$, little can be said and the best bounds to consider are perhaps the distribution-free bounds,
which are of the same order of magnitude for the two pairs of strategies.
\end{itemize}

\bigskip
We propose two simple experiments to illustrate our theoretical analysis;
each of them was run on $10^4$ instances of the problem and we plotted the average simple regret.
This is an instance of the Monte-Carlo method and provides accurate estimators of the expected
simple regret $\E r_n$.

The first experiment (upper plot of Figure~\ref{fig:1})
shows that for small values of $n$ (here, $n\leq 80$), the uniform allocation strategy can have an interesting behavior.
Of course the range of these ``small'' values of $n$ can be made arbitrarily large by decreasing the gap $\Delta$.
The second one (lower plot of Figure~\ref{fig:1}) corresponds to the numerical example to be described in Section~\ref{sec:app2}.
In both cases, the unclear picture for small values of $n$ become clearer for moderate values and shows an advantage
in favor of UCB--based allocation strategies. It also appears (here and in other non reported experiments)
that it is better in practice to use recommendations based on the empirical best arm rather than on the most played arm.
In particular, the theoretical upper bounds indicated in this paper for the combination of UCB as an allocation strategy and
the recommendation based on the empirical best arm (see Remark~\ref{rk:KS}) are probably to be improved.
\begin{figure}[p]
\begin{tabular}{c}
\includegraphics[scale = 0.8]{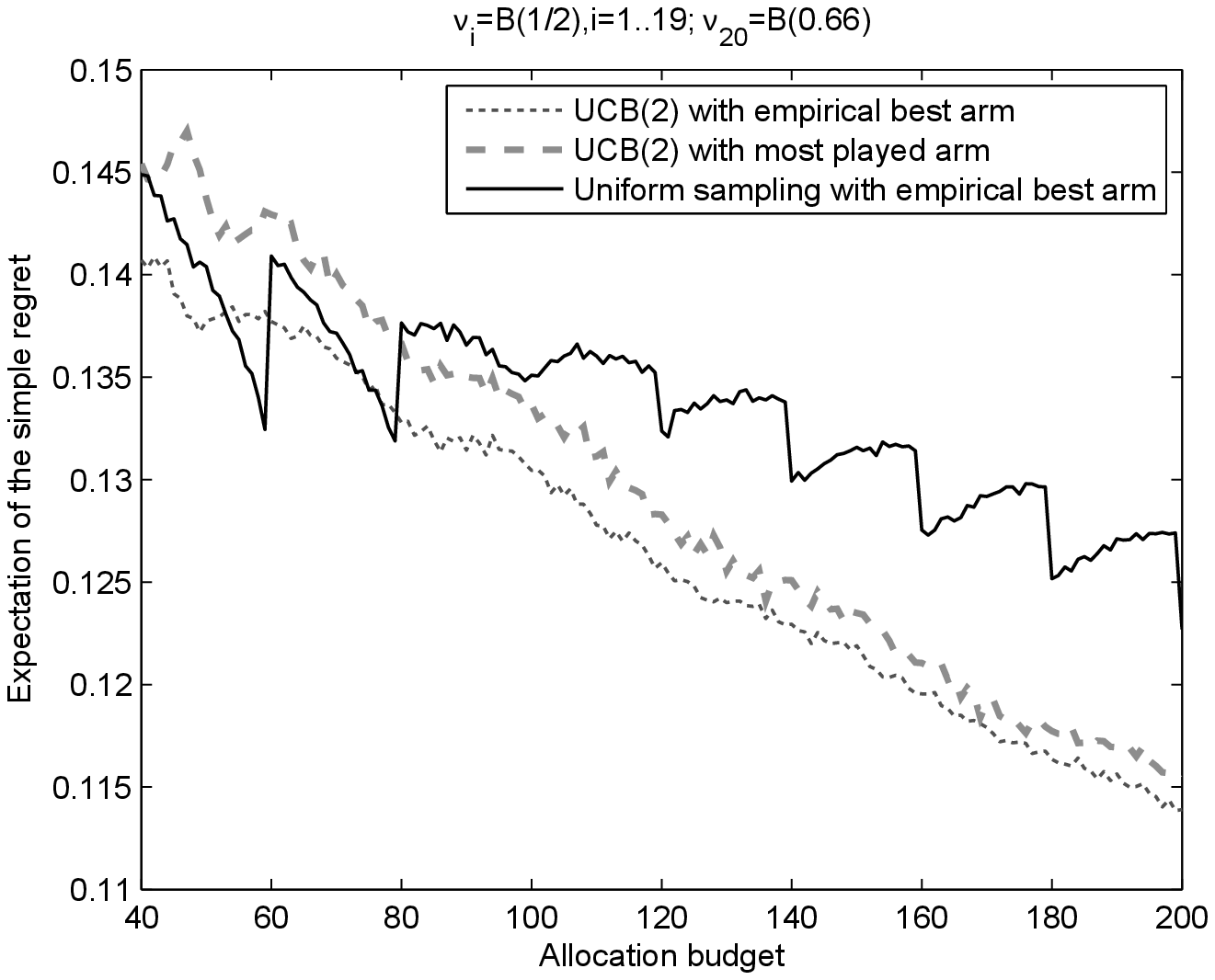} \\
\includegraphics[scale = 0.8]{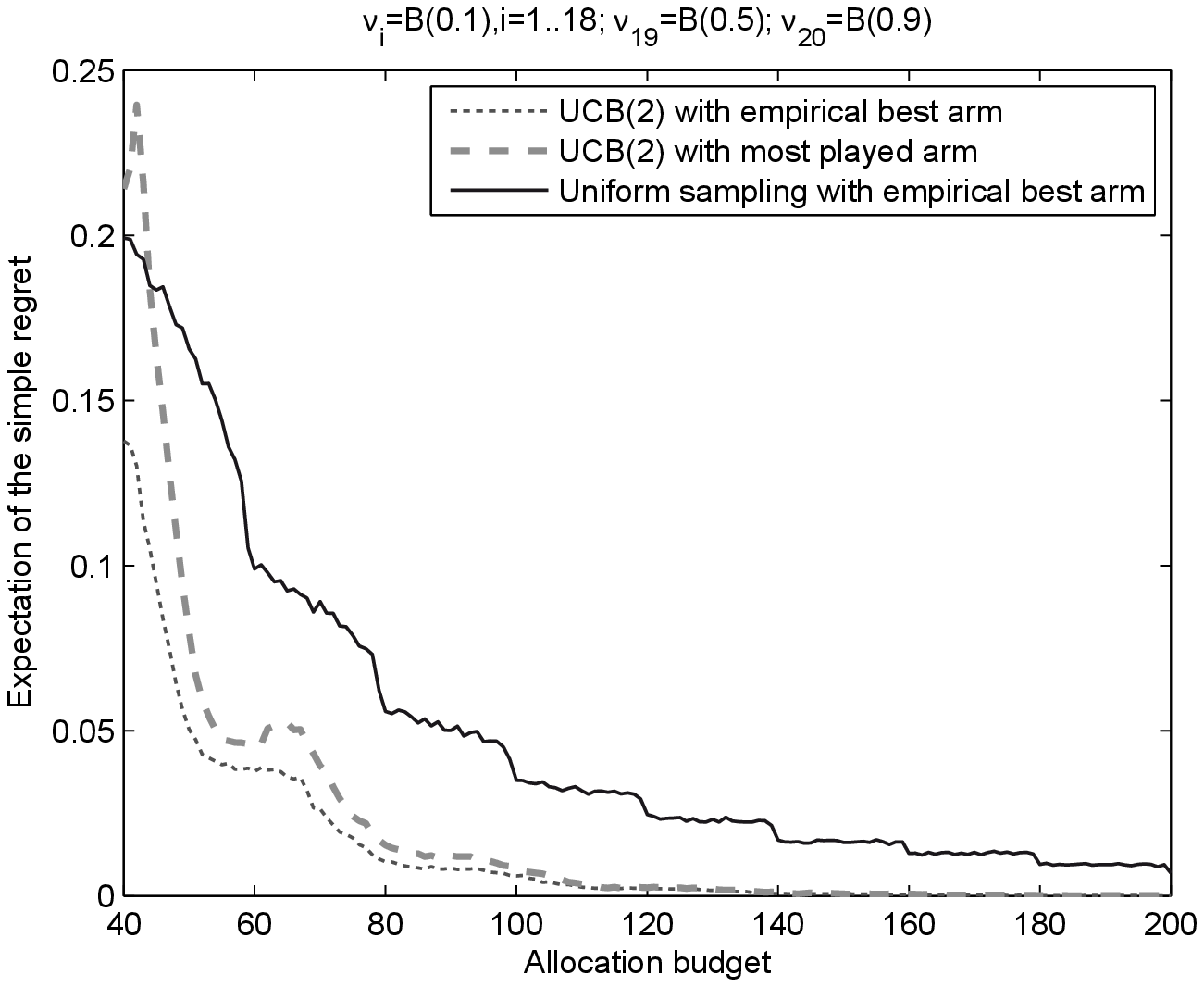}
\end{tabular}
\caption{\label{fig:1}
$K = 20$ arms with Bernoulli distributions of parameters indicated on top of each graph.
$x$-axis: number of rounds $n$; $y$-axis: simple regrets $\E r_n$ (estimated by a Monte-Carlo method).}
\end{figure}

\begin{remark}
We mostly illustrated here the small and moderate $n$ regimes.
This is because for large $n$, the simple regret is usually very small,
even below computer precision.
Therefore, because of the chosen ranges, we do not see yet the uniform allocation strategy getting better than UCB--based
strategies, a fact that is true however for large enough $n$.
This has an important impact on the interpretation of the lower bound of Theorem~\ref{th:MainDistDep}.
While its statement is in finite time, it should be interpreted as providing an asymptotic result only.
\end{remark}

\section{Pure exploration for continuous--armed bandits}
\label{sec:topo}

This section is of theoretical interest.
We consider the $\cX$--armed bandit problem already studied, e.g., in \cite{BMSS09,Kle04},
and (re)define the notions of cumulative and simple regret in this setting.
We show that the cumulative regret can be minimized if and only if the simple regret can be minimized,
and use this equivalence to characterize the metric spaces $\cX$ in which the cumulative
regret can be minimized: the separable ones. Here, in addition to its natural interpretation,
the simple regret thus appears as a tool for proving results on the cumulative regret.

\subsection{Description of the model of $\cX$--armed bandits}

We consider a bounded interval of $\R$, say $[0,1]$ again. We denote by $\cP([0,1])$
the set of probability distributions over $[0,1]$.
Similarly, given a topological space $\cX$,
we denote by $\cP(\cX)$ the set of probability distributions over $\cX$.
We then call environment on $\cX$
any mapping $E : \cX \to \cP([0,1])$.
We say that $E$ is continuous if the mapping that associates
to each $x \in \cX$ the expectation $\mu(x)$ of $E(x)$ is continuous;
we call the latter the mean-payoff function.

The $\cX$--armed bandit problem is described in Figures~\ref{fig:exp2cont}
and~\ref{fig:expcont}. There, an environment
$E$ on $\cX$ is fixed and we want various notions of regret to be small,
given this environment.
\begin{figure}[h]
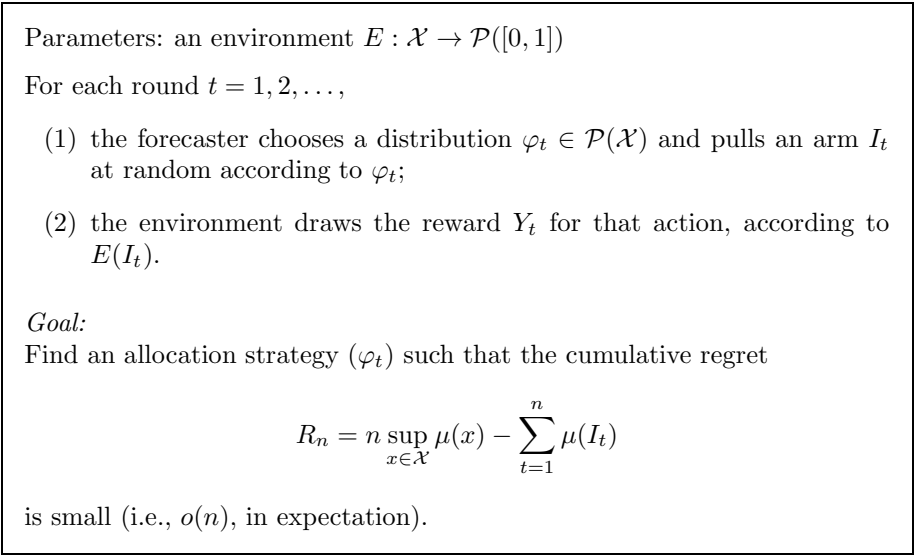

\bookbox{
Parameters: an environment $E : \cX \to \cP([0,1])$

\medskip\noindent
For each round $t=1,2,\ldots,$
\begin{itemize}
\item[(1)]
the forecaster chooses a distribution $\phi_t \in \cP(\cX)$
and pulls an arm $I_t$ at random according to $\phi_t$;
\item[(2)]
the environment draws the reward $Y_t$ for that action, according to $E(I_t)$.
\end{itemize}

\medskip
{\em Goal:} \\
Find an allocation strategy $(\phi_t)$ such that the cumulative regret
\[
R_n = n \sup_{x \in \cX} \mu(x) - \sum_{t=1}^n \mu(I_t)
\]
is small (i.e., $o(n)$, in expectation).
}
\caption{\label{fig:exp2cont} The classical
$\cX$--armed bandit problem.}
\end{figure}
\begin{figure}[h]
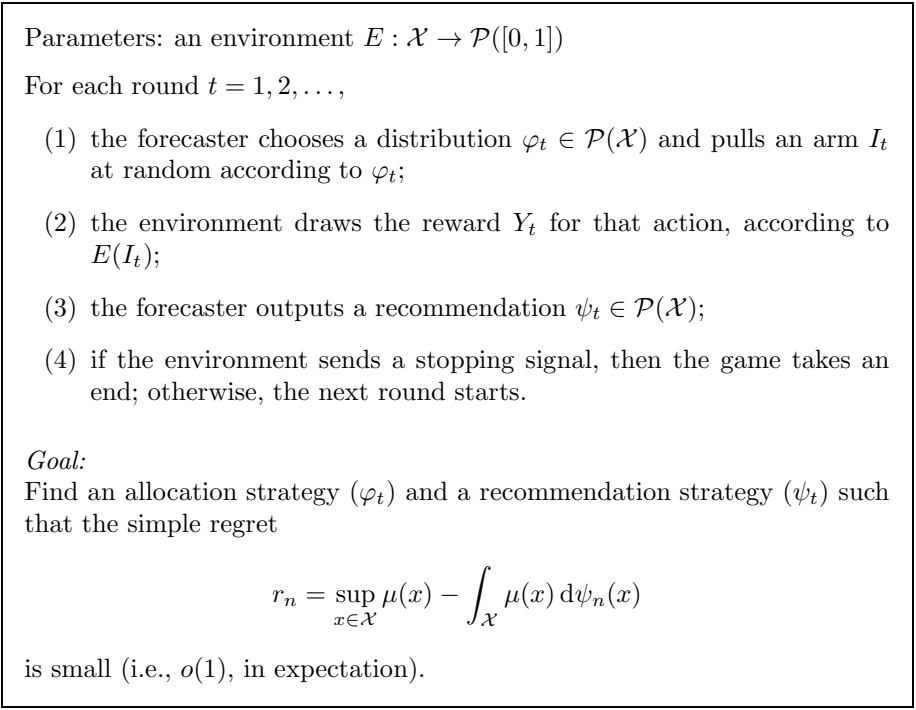

\bookbox{
Parameters: an environment $E : \cX \to \cP([0,1])$

\medskip\noindent
For each round $t=1,2,\ldots,$
\begin{itemize}
\item[(1)]
the forecaster chooses a distribution $\phi_t \in \cP(\cX)$
and pulls an arm $I_t$ at random according to $\phi_t$;
\item[(2)]
the environment draws the reward $Y_t$ for that action, according to $E(I_t)$;
\item[(3)]
the forecaster outputs a recommendation $\psi_t \in \cP(\cX)$;
\item[(4)]
if the environment sends a stopping signal, then the game takes an end;
otherwise, the next round starts.
\end{itemize}

\medskip
{\em Goal:} \\
Find an allocation strategy $(\phi_t)$ and a recommendation strategy $(\psi_t)$
such that the simple regret
\[
r_n = \sup_{x \in \cX} \mu(x) - \int_{\cX} \mu(x)\,\mbox{d}\psi_n(x)
\]
is small (i.e., $o(1)$, in expectation).
}
\caption{\label{fig:expcont}
The pure exploration problem for $\cX$--armed bandits.}
\end{figure}

We consider now families of environments and say that
a family $\cF$ of environments is explorable--exploitable (respectively, explorable) if there exists a forecaster such that
for any environment $E \in \cF$,
the expected cumulative regret $\E R_n$ (expectation taken with respect to $E$ and
all auxiliary randomizations) is $o(n)$ (respectively, $\E r_n = o(1)$).
Of course, explorability of $\cF$ is a milder
requirement than explorability--exploitability of $\cF$, as can be seen by
considering the recommendation given by the empirical distribution of plays of
Figure~\ref{fig:recomm} and applying the same argument as the one used at
the beginning of Section~\ref{sec:smallbad}.

In fact, it can be seen that the two notions are equivalent,
and this is why we will henceforth concentrate on explorability only,
for which characterizations as the ones of Theorem~\ref{th:CNSexplo}
are simpler to exhibit and prove.

\begin{lemma}
\label{lm:eqvexplo}
A family of environments $\cF$ is explorable if and only if it is explorable--exploitable.
\end{lemma}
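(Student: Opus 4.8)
The forward (easy) implication is exactly the remark preceding the statement: given an allocation strategy with $\E R_n = o(n)$, recommend the empirical distribution of plays $\psi_n = \frac1n\sum_{t=1}^n \delta_{I_t}$. Since $\int_{\cX} \mu\,\mathrm{d}\psi_n = \frac1n\sum_{t=1}^n \mu(I_t)$, one has $r_n = R_n/n$, hence $\E r_n = \E R_n / n = o(1)$, so $\cF$ is explorable. The whole content of the lemma is the converse, which I would prove by a phased reduction turning any simple-regret forecaster into a cumulative-regret one.

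So suppose $\cF$ is explorable and fix a forecaster $\mathcal{A}=(\phi_t,\psi_t)$ witnessing this, i.e.\ with $\E r_n = o(1)$ for every $E\in\cF$. I would build an allocation strategy operating in consecutive epochs $m=1,2,\ldots$, epoch $m$ having length $L_m$ and split into an exploration phase of $s_m$ rounds followed by an exploitation phase of $L_m - s_m$ rounds. During the exploration phase I run a fresh independent copy of $\mathcal{A}$ for $s_m$ rounds and read off its recommendation $\psi_{s_m}$; during the exploitation phase I simply set $\phi_t = \psi_{s_m}$, i.e.\ I pull arms i.i.d.\ according to that recommendation. The key observation is that, conditioning on the recommendation, each exploitation pull incurs expected instantaneous regret $\sup_x\mu(x) - \int_{\cX} \mu\,\mathrm{d}\psi_{s_m} = r_{s_m}$; therefore the expected regret of epoch $m$ is at most $s_m + (L_m - s_m)\,\E r_{s_m}$, the first term bounding the (at most one per round) exploration regret.

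Summing over the first $M$ epochs and writing $n = \sum_{m=1}^M L_m$, I get
\[
\frac{\E R_n}{n} \;\leq\; \frac{\sum_{m=1}^M s_m}{\sum_{m=1}^M L_m} \;+\; \frac{\sum_{m=1}^M L_m\,\E r_{s_m}}{\sum_{m=1}^M L_m}.
\]
I would pick a schedule, fixed independently of $E$, with $s_m\to\infty$, $\sum_m L_m = \infty$, $\sum_{m\leq M} s_m = o\bigl(\sum_{m\leq M}L_m\bigr)$, and epochs negligible in the sense $L_{M+1}=o\bigl(\sum_{m\leq M}L_m\bigr)$ --- for instance $s_m=m$ and $L_m=m^2$. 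The first term then vanishes by construction. For the second, since $s_m\to\infty$ the explorability of $\mathcal{A}$ gives $\E r_{s_m}\to 0$ for the fixed $E$ at hand, and a weighted Ces\`aro (Toeplitz) argument shows that a weighted average of a null sequence, with nonnegative weights $L_m$ whose partial sums diverge, also tends to $0$. Hence $\E R_n/n\to 0$ along epoch boundaries, and the negligibility of $L_{M+1}$ controls the intermediate $n$ (where at most $L_{M+1}$ extra regret is added), giving $\E R_n = o(n)$ for every $E\in\cF$, as required.

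The only delicate point is that the rate at which $\E r_{s_m}\to 0$ depends on the environment $E$, whereas the epoch schedule must be chosen once and for all. This is exactly why I insist on a per-environment $o(n)$ conclusion and route the argument through the Toeplitz lemma, which needs only $\E r_{s_m}\to0$ for the individual $E$ and no uniform rate. I expect this interchange --- a fixed schedule versus environment-dependent convergence --- to be the main obstacle to state cleanly, together with the bookkeeping needed to pass from epoch boundaries to arbitrary $n$.
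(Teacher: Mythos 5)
Your proof is correct and takes essentially the same route as the paper's: both reduce cumulative to simple regret by scheduling separate exploration and exploitation phases, feeding only the exploration payoffs to the simple-regret forecaster, playing its recommendation during the exploitation rounds, and then bounding $\E R_n$ by (number of exploration rounds) $+ \sum_s (\mbox{exploitation length}) \cdot \E r_s$, concluding with exactly the Ces\`aro/Toeplitz argument you invoke (which, as you note, is what makes the environment-dependent rate harmless). The only difference is bookkeeping: the paper keeps a single run of the base forecaster alive, inserting one exploration round at the start of each regime of length $1+t$ and playing its current recommendation $t$ times, whereas you restart a fresh independent copy for $s_m$ rounds in each epoch; both schedules make the exploration fraction and the weighted average of the simple regrets vanish, so the two constructions succeed for the same reason.
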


The proof can be found in Section~\ref{sec:app3}. It relies essentially on designing a strategy suited for cumulative regret
from a strategy minimizing the simple regret; to do so, exploration and exploitation
occur at fixed rounds in two distinct phases and only the payoffs obtained during
exploration rounds are fed into the base allocation strategy.

\subsection{A positive result for metric spaces}

We denote by $\cP([0,1])^\cX$ the family of all possible environments $E$ on
$\cX$, and by $\cC \bigl( \cP([0,1])^\cX \bigr)$ the subset of
$\cP([0,1])^\cX$ formed by the continuous environments.

\begin{ex}
\label{ex2}
Previous sections were about the family $\cP([0,1])^\cX$
of all environments over $\cX = \{ 1,\ldots,K\}$
being explorable.
\end{ex}

The main result concerning $\cX$--armed bandit problems is
formed by the following equivalences in metric spaces. It generalizes
the result of Example~\ref{ex2}.

\begin{theorem}
\label{th:CNSexplo}
Let $\cX$ be a metric space. Then the family
$\cC \bigl( \cP([0,1])^\cX \bigr)$ is explorable if and only if
$\cX$ is separable.
\end{theorem}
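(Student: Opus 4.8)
The plan is to prove both implications, reducing everything via Lemma~\ref{lm:eqvexplo} to the (a priori weaker) notion of explorability. The engine behind both directions is the standard fact that a metric space $\cX$ is separable if and only if, for every $\epsilon > 0$, every $\epsilon$-separated subset is countable; equivalently, non-separability yields some $\epsilon > 0$ together with an \emph{uncountable} set $S \subseteq \cX$ whose points are pairwise at distance at least $\epsilon$.

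For the ``if'' direction, assume $\cX$ is separable and fix a countable dense sequence $(x_k)_{k \ge 1}$. For any continuous environment $E$ with mean-payoff $\mu$, density together with continuity of $\mu$ gives $\max_{k \le K} \mu(x_k) \to \sup_{x \in \cX} \mu(x)$ as $K \to \infty$. I would therefore let the forecaster ignore all arms outside the dense sequence and run a uniform exploration over a slowly growing prefix $x_1, \ldots, x_{K_n}$, recommending through a Dirac mass $\psi_n = \delta_{x_{\hat\jmath}}$ the empirical best among these $K_n$ arms. Writing $r_n = \bigl(\sup_x \mu(x) - \max_{k \le K_n} \mu(x_k)\bigr) + \bigl(\max_{k \le K_n}\mu(x_k) - \mu(x_{\hat\jmath})\bigr)$, the first term is deterministic and vanishes by the convergence above, while the expectation of the second is the expected simple regret of a uniform exploration among $K_n$ arms, of order $\sqrt{K_n (\ln K_n)/n}$ by the (distribution-free) analysis behind Corollary~\ref{cor:unif2}. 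Choosing $K_n \to \infty$ with $K_n \ln K_n = o(n)$, for instance $K_n = \lceil \ln n \rceil$, makes both terms tend to $0$, so $\E r_n = o(1)$ and $\cC\bigl(\cP([0,1])^\cX\bigr)$ is explorable.

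For the ``only if'' direction I argue by contraposition: assuming $\cX$ non-separable, I build a family of continuous environments defeating every forecaster. Fix $\epsilon$ and the uncountable $\epsilon$-separated set $S$, so that the balls $B_s = B(s,\epsilon/2)$, $s \in S$, are pairwise disjoint. For each $s$ let $E_s$ be the Bernoulli environment with mean-payoff $\mu_s(x) = \tfrac12 + \tfrac12\max\bigl(0,\,1 - 2\,d(x,s)/\epsilon\bigr)$, which is continuous, equals $\tfrac12$ outside $B_s$, and peaks at $\mu_s(s) = 1$; also include the flat baseline $E_0 \equiv \tfrac12$. The heart of the argument is a needle-in-a-haystack change of measure. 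Since the balls are disjoint, under $E_0$ the expected number of pulls landing in $B_s$, summed over $s \in S$, is at most $n$, and the expected recommended mass $\E_{E_0}[\psi_n(B_s)]$ summed over $s$ is at most $1$; being sums over an uncountable index set, all but countably many terms vanish. For each $n$ there is thus a co-countable set $G_n \subseteq S$ on which, under $E_0$, the forecaster almost surely never pulls inside $B_s$ up to round $n$ and recommends zero mass there. Comparing $E_0$ and $E_s$, which differ only on $B_s$, the law of the history up to the first pull in $B_s$ is identical, so for $s \in G_n$ that first pull never occurs under $E_s$ either, whence $\psi_n$ has the same law and $\E_{E_s}[\psi_n(B_s)] = 0$; since $\mu_s = \tfrac12$ off $B_s$ while $\sup_x \mu_s = 1$, this forces $\E_{E_s} r_n = \tfrac12$.

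It remains to extract a single bad environment valid for all $n$: the complement in $S$ of $\bigcap_n G_n$ is a countable union of countable sets, hence countable, so $\bigcap_n G_n$ is nonempty, and any $s^\star$ in it satisfies $\E_{E_{s^\star}} r_n = \tfrac12$ for every $n$, contradicting explorability. I expect the main obstacle to be the rigorous justification of this change of measure and the attendant measurability issues: one must argue cleanly that conditioning on ``no pull in $B_s$ so far'' makes $\P_{E_0}$ and $\P_{E_s}$ agree on the generated history, e.g.\ via the stopping time $\tau_s$ of the first pull in $B_s$, showing $\P_{E_s}\{\tau_s \le n\} = \P_{E_0}\{\tau_s \le n\} = 0$ for $s \in G_n$, and that $\psi_n(B_s)$ and the pull counts are genuinely measurable functions of the history so that the summations over the uncountable set $S$ are legitimate.
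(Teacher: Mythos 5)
Your proposal is correct, and its overall architecture coincides with the paper's: both directions hinge on the same characterization of non-separability (an uncountable $\rho$-separated set, hence uncountably many disjoint balls --- the paper's Lemma~\ref{lm:Bil}), the negative direction is the same needle-in-a-haystack countability argument ending with a co-countable intersection over $n$, and the positive direction uses the same approximation-plus-estimation decomposition with an empirical-best recommendation. The differences are in implementation, and they cut in opposite ways. For the positive direction, you explore a deterministic growing prefix of a dense sequence; the paper instead first shows that separability is equivalent to the existence of a probability measure $\lambda$ with full support (its Lemma~\ref{lm:Gspace}), draws exploration points at random from $\lambda$ in regimes, and conditions on the $\sigma$-algebra of drawn points to apply Hoeffding. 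Your version derandomizes this and bypasses Lemma~\ref{lm:Gspace} entirely, which is arguably simpler, at the cost of having to check that an anytime implementation keeps all $K_n$ prefix arms pulled $\Theta(n/K_n)$ times as the prefix grows. For the negative direction, the paper takes \emph{deterministic} environments, $E_a(x) = \delta_{\mu_a(x)}$ with baseline $0$ and peak $1$: under the all-zero environment the forecaster's behavior is a function of its auxiliary randomization alone, so the comparison between $E_0$ and $E_a$ is an exact identity of likelihoods rather than a stochastic coupling, and the conclusion is the stronger $\E_a r_n = 1$. Your Bernoulli-$\tfrac12$ baseline forces precisely the stopping-time change-of-measure argument you flag as the main obstacle (it does go through, but it is the only delicate point in your write-up, and the paper's construction shows it can be avoided altogether). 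Finally, your counting via expected pull counts summed over $s$ being at most $n$ is an equivalent substitute for the paper's observation that the mixture laws $\mathbb{A}\cdot\phi^0_n$ and $\mathbb{A}\cdot\psi^0_n$ can charge at most countably many disjoint balls.
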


\begin{corollary}
\label{cor:count}
Let $\cX$ be a set. The family
$\cP([0,1])^\cX$ is explorable if and only if $\cX$ is countable.
\end{corollary}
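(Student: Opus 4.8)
The plan is to deduce Corollary~\ref{cor:count} from Theorem~\ref{th:CNSexplo} by equipping the abstract set $\cX$ with a metric that (i) makes \emph{every} environment continuous, so that the family $\cP([0,1])^\cX$ of all environments coincides with the subfamily $\cC\bigl(\cP([0,1])^\cX\bigr)$ of continuous ones, and (ii) reduces the topological notion of separability to the combinatorial notion of countability. The discrete metric $d(x,y) = \ds1_{\{x \ne y\}}$ on $\cX$ does both at once, and it is also the canonical choice, since for an abstract set the probability distributions $\cP(\cX)$ entering the definition of the bandit problem are taken over the full power-set $\sigma$-algebra, which is precisely the Borel $\sigma$-algebra of the discrete topology.

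First I would fix this discrete metric and record two elementary facts. Since in the discrete topology every singleton is open, every map $\mu : \cX \to [0,1]$ is continuous; hence every environment $E$ on $\cX$ is continuous, and $\cP([0,1])^\cX = \cC\bigl(\cP([0,1])^\cX\bigr)$. Second, I would check that a discrete metric space is separable if and only if it is countable: every subset is closed, so the only dense subset is $\cX$ itself, whence a countable dense subset exists exactly when $\cX$ is countable. With these two observations in hand, Theorem~\ref{th:CNSexplo} applied to the metric space $(\cX,d)$ gives that $\cC\bigl(\cP([0,1])^\cX\bigr)$ is explorable if and only if $\cX$ is separable; rewriting the left-hand side as $\cP([0,1])^\cX$ and the right-hand side as ``$\cX$ countable'' yields exactly the stated equivalence.

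The only point that requires care~-- and it is the main (mild) obstacle~-- is to argue that passing to the discrete metric does not alter the notion of explorability being asserted in the corollary. Concretely, one must confirm that the class of allocation and recommendation strategies, as well as the value of $\E r_n$, depend on $\cX$ only through the measurable structure on which $\cP(\cX)$ is built, and that for a set carrying no a priori topology this structure is the power-set $\sigma$-algebra. The discrete metric induces exactly this measurable structure, so explorability of $\cP([0,1])^\cX$ in the sense of the corollary matches explorability of $\cC\bigl(\cP([0,1])^\cX\bigr) = \cP([0,1])^\cX$ for $(\cX,d)$ in the sense of Theorem~\ref{th:CNSexplo}. Once this identification is made explicit, both implications transfer verbatim from the theorem, with no further estimates needed.
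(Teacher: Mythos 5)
Your proof is correct and follows essentially the same route as the paper's own ``\`a la Bourbaki'' argument: endow $\cX$ with the discrete metric, note that all environments become continuous so that $\cP([0,1])^\cX = \cC\bigl(\cP([0,1])^\cX\bigr)$, observe that separability of a discrete space amounts to countability, and invoke Theorem~\ref{th:CNSexplo}. Your additional remark about the power-set $\sigma$-algebra matching the Borel $\sigma$-algebra of the discrete topology is a point the paper leaves implicit, but it does not change the substance of the argument.
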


The proofs can be found in Section~\ref{sec:app4}.
Their main technical ingredient is that there exists a probability distribution
over a metric space $\cX$ giving a positive probability mass to all open sets if
and only if $\cX$ is separable. Then, whenever it exists, it allows some uniform
exploration.

\begin{remark}
We discuss here the links with results reported recently in~\cite{KlSl10}.
The latter restricts its attention to a setting where the space $\cX$
is a metric space (with metric denoted by $d$) and where the environments must
have mean-payoff functions that are $1$--Lipschitz with respect to $d$.
Its main concern is about the best achievable order of magnitude of the cumulative regret
with respect to $T$. In this respect, its main result is that a distribution-dependent
bound proportional to $\log(T)$ can be achieved if and only if the completion of $\cX$
is a compact metric space with countably many points. Otherwise, bounds on the regret
are proportional to at least $\sqrt{T}$. In fact, the links between our work and this article
are not in the statements of the results proved but rather in the techniques used in the proofs.
\end{remark}

\subsection{Proof of Lemma~\ref{lm:eqvexplo}}
\label{sec:app3}

\begin{proof}
In view of the comments before the statement of Lemma~\ref{lm:eqvexplo}, we need only to prove
that an explorable family $\cF$ is also explorable--exploitable. We consider a pair
of allocation $(\varphi_t)$ and recommendation $(\psi_t)$ strategies such that for all
environments $E \in \cF$, the simple regret satisfy $\E r_n = o(1)$, and provide a new
strategy $(\varphi'_t)$ such that its cumulative regret satisfies $\E R'_n = o(n)$ for all environments $E \in \cF$.

It is defined informally as follows. At round $t = 1$, it uses $\varphi'_1 = \varphi_1$ and gets a reward
$Y_1$. Based on this reward, the recommendation $\psi_1(Y_1)$ is formed and at round $t=2$, the new strategy
plays $\varphi'_2(Y_1) = \psi_1(Y_1)$. It gets a reward $Y_2$ but does not take it into account. It bases its choice
$\varphi'_3(Y_1,Y_2) = \varphi_2(Y_1)$ only on $Y_1$ and gets a reward $Y_3$. Based on $Y_1$ and $Y_3$,
the recommendation $\psi_2(Y_1,Y_3)$ is formed and played at rounds $t=4$ and $t=5$, i.e.,
\[
\varphi'_4(Y_1,Y_2,Y_3) = \varphi'_5(Y_1,Y_2,Y_3,Y_4) = \psi_2(Y_1,Y_3)~.
\]
And so on: the sequence of distributions chosen by the new strategy is formed using the applications
\begin{eqnarray*}
& \varphi_1, & \psi_1, \\
& \varphi_2, & \psi_2, \, \psi_2, \\
& \varphi_3, & \psi_3, \, \psi_3, \, \psi_3, \\
& \varphi_4, & \psi_4, \, \psi_4, \, \psi_4, \, \psi_4, \\
& \varphi_5, & \psi_5, \, \psi_5, \, \psi_5, \, \psi_5, \, \psi_5, \\
& \ldots &
\end{eqnarray*}

Formally, we consider regimes indexed by integers $t\geq 1$ and of length $1+t$. The $t$--th
regime starts at round
\[
1 + \sum_{s=1}^{t-1} (1+s) = t + \frac{t(t-1)}{2} = \frac{t(t+1)}{2}~.
\]
During this regime, the following distributions are used,
\begin{numcases}{\varphi'_{t(t+1)/2 + k} =}
\nonumber
\varphi_t \Bigl( \bigl( Y_{s(s+1)/2} \bigr)_{s = 1,\ldots,t-1} \Bigr) & if $k=0$; \\
\nonumber
\psi_t \Bigl( \bigl( Y_{s(s+1)/2} \bigr)_{s = 1,\ldots,t-1} \Bigr) & if $1 \leq k \leq t$.
\end{numcases}
Note that we only keep track of the payoffs obtained when $k=0$ in a regime.

The regret $R'_n$ at round $n$ of this strategy is as follows. We decompose $n$ in a unique manner
as
\begin{equation}
\label{eq:decompn}
n = \frac{t(n) \bigl( t(n) + 1 \bigr)}{2} + k(n) \qquad
\mbox{where} \qquad k(n) \in \bigl\{ 0,\ldots,t(n) \bigr\}~.
\end{equation}
Then (using also the tower rule),
\[
\E R'_n \leq t(n) + \Bigl( \E r_1 + 2 \, \E r_2 + \ldots + \bigl( t(n)-1 \bigr) \, \E r_{t(n)-1} + k(n) \, \E r_{t(n)} \Bigr)
\]
where the first term comes from the time rounds when the new strategy used the base allocation strategy to explore
and where the other terms come from the ones when it exploited.
This inequality can be rewritten as
\[
\frac{\E R'_n}{n} \leq \frac{t(n)}{n} + \frac{k(n) \, \E r_{t(n)} + \sum_{s=1}^{t(n)-1} s \, \E r_s}{n}~,
\]
which shows that $\E R'_n = o(n)$ whenever $\E r_s = o(1)$ as $s \to \infty$, since the first term in the right-hand side is of the order of $1/\sqrt{n}$ and the second one is a Cesaro average. This concludes that the exhibited strategy has a small cumulative
regret for all environments of the family, which is thus explorable--exploitable.
\end{proof}

\subsection{Proof of Theorem~\ref{th:CNSexplo} and its corollary}
\label{sec:app4}

The key ingredient is the following characterization of separability
(which relies on an application of Zorn's lemma); see, e.g., \cite[Appendix I, page 216]{Bil68}.

\begin{lemma}
\label{lm:Bil}
A metric space $\cX$, with distance denoted by $d$,
is separable if and only if
it contains no uncountable subset $A$
such that
\[
\rho = \inf \bigl\{ d(x,y): x,y \in A \bigr\} > 0~.
\]
\end{lemma}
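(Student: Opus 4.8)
The plan is to prove the two implications separately: the forward direction (separable implies no uncountable $\rho$-separated set) is a short packing argument, while the reverse direction is where Zorn's lemma enters, exactly as flagged before the statement. Throughout, I read the condition $\rho = \inf\{d(x,y) : x,y \in A\} > 0$ as quantifying over distinct points, i.e.\ $A$ is uniformly discrete, so that distinct points of $A$ are pairwise at distance at least $\rho$.

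For the forward direction I would argue by contraposition. Suppose $\cX$ contains an uncountable subset $A$ with $\rho = \inf\{d(x,y) : x,y \in A,\ x \neq y\} > 0$. The open balls $B(a,\rho/2)$ indexed by $a \in A$ are pairwise disjoint, since a common point of two such balls would, by the triangle inequality, force the corresponding centers to lie within distance $\rho$ of one another, contradicting the definition of $\rho$. If $D \subseteq \cX$ were a countable dense set, then each nonempty ball $B(a,\rho/2)$ would contain a point of $D$, and disjointness would make the resulting choice $a \mapsto$ (a point of $D$ in $B(a,\rho/2)$) injective. This embeds the uncountable set $A$ into the countable set $D$, which is impossible; hence $\cX$ is not separable, which is the contrapositive of the desired implication.

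For the reverse direction I would build a countable dense set directly. For each integer $n \geq 1$, consider the subsets of $\cX$ whose distinct points are pairwise at distance at least $1/n$; this family is ordered by inclusion and closed under unions of chains, so Zorn's lemma yields a maximal such set $A_n$. Being $(1/n)$-separated, each $A_n$ satisfies $\inf\{d(x,y) : x,y \in A_n,\ x \neq y\} \geq 1/n > 0$, so by hypothesis $A_n$ cannot be uncountable and is therefore countable. I would then set $D = \bigcup_{n \geq 1} A_n$, a countable union of countable sets, hence countable.

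The step requiring care, and the place where maximality does the work, is checking that $D$ is dense. Given $x \in \cX$ and $\epsilon > 0$, choose $n$ with $1/n < \epsilon$. If $x$ were at distance at least $1/n$ from every point of $A_n$, then $A_n \cup \{x\}$ would still be $(1/n)$-separated, contradicting the maximality of $A_n$; hence some $a \in A_n \subseteq D$ has $d(x,a) < 1/n < \epsilon$. Thus every point of $\cX$ is approximated arbitrarily well from $D$, so $D$ is dense and $\cX$ is separable. The argument is elementary once the maximal separated sets are available, and the only genuine obstacle is the appeal to Zorn's lemma to produce them, which cannot be dispensed with in general.
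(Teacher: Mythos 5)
Your proof is correct. Note that the paper itself does not prove this lemma: it cites \cite[Appendix I, page 216]{Bil68}, remarking only that the argument ``relies on an application of Zorn's lemma.'' Your write-up supplies exactly that standard argument --- the easy packing/contraposition for the forward direction, and Zorn's lemma applied to maximal $(1/n)$--separated sets, whose countable union is dense by maximality, for the converse --- so it is a faithful, self-contained filling-in of the proof the paper delegates to its reference, including the correct reading of the infimum as ranging over distinct pairs.
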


Separability can then be characterized
in terms of the existence of a probability distribution with full support.
Though it seems natural, we did not see any reference to it in the literature and this is why we
state it.
(In the proof of Theorem~\ref{th:CNSexplo}, we will only use the straightforward direct part
of the characterization.)

\begin{lemma}
\label{lm:Gspace}
Let $\cX$ be a metric space.
There exists a probability distribution $\lambda$ on $\cX$ with
$\lambda(V) > 0$ for all open sets $V$
if and only if
$\cX$ is separable.
\end{lemma}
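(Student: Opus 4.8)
The statement to prove is Lemma~\ref{lm:Gspace}: a metric space $\cX$ admits a probability distribution $\lambda$ with $\lambda(V) > 0$ for every open set $V$ if and only if $\cX$ is separable. Let me think about both directions.

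The easy direction should be: if such a $\lambda$ exists, then $\cX$ is separable. Suppose for contradiction $\cX$ is not separable. By Lemma~\ref{lm:Bil}, there is an uncountable subset $A$ with $\rho = \inf\{d(x,y) : x,y \in A\} > 0$. Then the open balls $B(x, \rho/2)$ for $x \in A$ are pairwise disjoint (if $z$ were in two of them, the triangle inequality would force two points of $A$ to be within $\rho$ of each other). Each has positive $\lambda$-measure by hypothesis. But uncountably many pairwise disjoint sets of positive measure cannot all fit inside a probability space: indeed, for each $m \geq 1$ at most $m$ of them can have measure $\geq 1/m$, so the collection of balls with positive measure is a countable union of finite sets, hence countable — contradicting that $A$ (and thus the family of balls) is uncountable.

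For the converse, suppose $\cX$ is separable, with a countable dense set $\{x_1, x_2, \ldots\}$. The plan is to build $\lambda$ as a weighted sum of point masses: set $\lambda = \sum_{i} 2^{-i}\, \delta_{x_i}$, where $\delta_{x_i}$ is the Dirac measure at $x_i$. This is a probability distribution since the weights sum to $1$. For any nonempty open set $V$, density of $\{x_i\}$ guarantees $V$ contains some $x_i$, whence $\lambda(V) \geq 2^{-i} > 0$. This handles the converse cleanly.

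I expect the main subtlety to lie in the easy-looking direction, specifically the counting argument that finitely many disjoint positive-measure balls can have measure at least $1/m$. The mild obstacle is making sure the balls are genuinely disjoint: the factor $\rho/2$ in the radius is what makes the triangle inequality work, since two points of $A$ are at distance at least $\rho$ apart, so balls of radius $\rho/2$ around them cannot overlap. Everything else is routine, relying on Lemma~\ref{lm:Bil} to supply the uncountable $\rho$-separated set and on the elementary fact that a probability measure can carry only countably many disjoint atoms of positive mass.
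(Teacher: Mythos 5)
Your proof is correct and follows essentially the same route as the paper's: for one direction, Dirac masses at a countable dense sequence weighted by $2^{-i}$; for the other, Lemma~\ref{lm:Bil} plus the impossibility of uncountably many disjoint balls of positive measure (your counting argument just spells out what the paper leaves implicit). The only detail the paper treats that you gloss over is the case of a \emph{finite} dense set, where the weights $2^{-i}$ no longer sum to $1$; this is trivially repaired by normalizing, or, as the paper does, by taking the uniform distribution over the $N$ points.
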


\begin{proof}
We prove the converse implication first.
If $\cX$ is separable, we denote by $x_1,x_2,\ldots$ a dense sequence. If it is finite with length $N$, we let
\[
\lambda = \frac{1}{N} \sum_{i=1}^N \delta_{x_i}
\]
and otherwise,
\[
\lambda = \sum_{i \geq 1} \frac{1}{2^i} \delta_{x_i}~.
\]
The result follows, since each open set $V$ contains at least some $x_i$.

For the direct implication, we use Lemma~\ref{lm:Bil} (and its notations).
If $\cX$ is not separable, then it
contains uncountably many disjoint open balls, formed by the $B(a,\rho/2)$, for
$a \in A$. If there existed a probability distribution $\lambda$
with full support on $\cX$, it would in particular give a positive probability
to all these balls; but this is impossible, since there are uncountably many of them.
\end{proof}

\subsubsection{Separability of $\cX$ implies explorability of the family $\cC \bigl( \cP([0,1])^\cX \bigr)$}

The proof of the converse part of the characterization provided
by Theorem~\ref{th:CNSexplo} relies on a somewhat uniform exploration that hits
each open set of $\cX$ after a random waiting time with distribution
depending on the probability of the open set.

\begin{proof}
Since $\cX$ is separable, there exists a probability distribution $\lambda$ on $\cX$
with $\lambda(V) > 0$ for all open sets $V$, as asserted by Lemma~\ref{lm:Gspace}.

The proposed strategy is then constructed in a way similar to the one exhibited in Section~\ref{sec:app2}, in the
sense that we also consider successives regimes, where the $t$--th of them has also length $1+t$.
They use the following allocations,
\begin{numcases}{\varphi_{t(t+1)/2 + k} =}
\nonumber
\lambda & if $k=0$; \\
\nonumber
\delta_{I_{k(k+1)/2}} & if $1 \leq k \leq t$.
\end{numcases}
Put in words, at the beginning of each regime, a new point $I_{t(t+1)/2}$ is drawn at random in $\cX$ according to $\lambda$,
and then, all previously drawn points $I_{s(s+1)/2}$, for $1 \leq s \leq t-1$, and the
new point $I_{t(t+1)/2}$ are pulled again, one after the other.

The recommendations $\psi_n$ are deterministic and put all probability mass on the best empirical arm
among the first played $g(n)$ arms (where the function $g$ will be determined by the analysis).
Formally, for all $x \in \cX$ such that
\[
T_n(x) = \sum_{t = 1}^n \mathbb{I}_{ \{ I_t = x \} } \geq 1~,
\]
one defines
\[
\wh{\mu}_n(x) = \frac{1}{T_n(x)} \sum_{t = 1}^n Y_t \, \mathbb{I}_{ \{ I_t = x \} }~.
\]
Then,
\[
\psi_n = \delta_{X_n^*} \quad \mbox{where} \quad X_n^* \in \argmax_{1 \leq s \leq g(n)} \,
\wh{\mu}_n \bigl( I_{s(s+1)/2} \bigr)
\]
(ties broken in some way, as usual;
and $g(n)$ to be chosen small enough so that all considered arms
have been played at least once). Note that exploration and exploitation appear in two distinct phases, as
was the case already, for instance, in Section~\ref{sec:unif}.

We now denote
\[
\mu^* = \sup_{x \in \cX} \mu(x) \quad \mbox{and} \quad
\mu^*_{g(n)} = \max_{1 \leq s \leq g(n)} \mu \bigl( I_{s(s+1)/2} \bigr)~;
\]
the simple regret can then be decomposed as
\[
\E r_n = \mu^* - \E \Bigl[ \mu \bigl( X_n^* \bigr) \Bigr] =
\left( \mu^* - \E \Bigl[ \mu^*_{g(n)} \Bigr] \right) +
\left( \E \Bigl[ \mu^*_{g(n)} \Bigr] - \E \Bigl[ \mu \bigl( X_n^* \bigr) \Bigr] \right)\!~,
\]
where the first difference can be thought of as an approximation error, and the second
one, as resulting from an estimation error. We now show that both differences
vanish in the limit.

We first deal with the approximation error. We fix $\varepsilon > 0$.
Since the mean-payoff function $\mu$ is continuous on $\cX$, there exists an open set
$V$ such that
\[
\forall x \in V, \qquad \mu^* - \mu(x) \leq \epsilon~.
\]
It follows that
\begin{multline}
\nonumber
\P \Bigl\{ \mu^* - \mu^*_{g(n)} > \varepsilon \Bigr\}
\leq \P \Bigl\{ \forall \, s \in \bigl\{ 1,\ldots,g(n) \bigr\}, \ \ \,\, I_{s(s+1)/2} \not\in V \Bigr\} \\
\leq \bigl( 1 - \lambda(V) \bigr)^{g(n)} \longrightarrow 0
\end{multline}
provided that $g(n) \to \infty$ (a condition that will be satisfied, see below).
Since in addition, $\mu^*_{g(n)} \leq \mu^*$, we get
\[
\limsup \ \mu^* - \E \Bigl[ \mu^*_{g(n)} \Bigr] \leq \epsilon~.
\]

For the difference resulting from the estimation error, we denote
\[
I_n^* \in \argmax_{1 \leq s \leq g(n)} \,
\mu \bigl( I_{s(s+1)/2} \bigr)
\]
(ties broken in some way).
Fix an arbitrary $\epsilon > 0$. We note that if for all $1 \leq s \leq g(n)$,
\[
\Bigl| \wh{\mu}_n \bigl( I_{s(s+1)/2} \bigr) - {\mu} \bigl( I_{s(s+1)/2} \bigr) \Bigr| \leq \epsilon~,
\]
then (together with the definition of $X_n^*$)
\[
\mu \bigl( X_n^* \bigr) \geq \wh{\mu}_n \bigl( X_n^* \bigr) - \epsilon
\geq \wh{\mu}_n \bigl( I_n^* \bigr) - \epsilon \geq
{\mu} \bigl( I_n^* \bigr) - 2\epsilon~.
\]
Thus, we have proved the inequality
\begin{equation}
\label{eq:unionbd}
\E \Bigl[ \mu^*_{g(n)} \Bigr] - \E \Bigl[ \mu \bigl( X_n^* \bigr) \Bigr] \leq 2 \epsilon + \P \biggl\{ \exists\,s \leq g(n),
\Bigl| \wh{\mu}_n \bigl( I_{s(s+1)/2} \bigr) - {\mu} \bigl( I_{s(s+1)/2} \bigr) \Bigr| > \epsilon
\biggr\}~.
\end{equation}
We use a union bound and control each (conditional) probability
\begin{equation}
\label{eq:cdprob}
\P \biggl\{ \Bigl| \wh{\mu}_n \bigl( I_{s(s+1)/2} \bigr) - {\mu} \bigl( I_{s(s+1)/2} \bigr) \Bigr| > \epsilon
\ \ \bigg| \ \ \cA_n \biggr\}
\end{equation}
for $1 \leq s \leq g(n)$, where
$\cA_n$ is the $\sigma$--algebra generated by the randomly drawn points
$I_{k(k+1)/2}$, for those $k$ with $k(k+1)/2 \leq n$. Conditionally to them,
$\wh{\mu}_n \bigl( I_{s(s+1)/2} \bigr)$ is an average of a deterministic number
of summands, which only depends on $s$, and thus, classical concentration-of-the-measure
arguments can be used.
For instance, the quantities (\ref{eq:cdprob}) are bounded, via an application of
Hoeffding's inequality \cite{Hoe63}, by
\[
2 \exp \Bigl( - 2 \, T_n \bigl( I_{s(s+1)/2} \bigr) \, \epsilon^2 \Bigr)~.
\]
We lower bound $T_n \bigl( I_{s(s+1)/2} \bigr)$.
The point $I_{s(s+1)/2}$ was pulled twice in regime $s$, once in each regime $s+1,\ldots,t(n)-1$,
and maybe in $t(n)$, where $n$ is decomposed again as in (\ref{eq:decompn}). That is,
\[
T_n \bigl( I_{s(s+1)/2} \bigr) \geq t(n) - s + 1 \geq \sqrt{2n} - 1 - g(n)~,
\]
since we only consider $s \leq g(n)$ and since (\ref{eq:decompn}) implies that
\[
n \leq \frac{t(n) \, \bigl( t(n) + 3 \bigr)}{2} \leq \frac{\bigl( t(n) + 2 \bigr)^2}{2}~, \quad \mbox{that is,} \ \ t(n) \geq \sqrt{2n} - 2~.
\]
Substituting this in the Hoeffding's bound, integrating, and taking a union bound lead from
(\ref{eq:unionbd}) to
\[
\E \Bigl[ \mu^*_{g(n)} \Bigr] - \E \Bigl[ \mu \bigl( X_n^* \bigr) \Bigr]
\leq 2 \epsilon + 2 g(n) \, \exp \left( - 2 \, \bigl( \sqrt{2n} - 1 - g(n) \bigr) \, \epsilon^2 \right)~.
\]
Choosing for instance $g(n) = \sqrt{n}/2$ ensures that
\[
\limsup \ \E \Bigl[ \mu^*_{g(n)} \Bigr] - \E \Bigl[ \mu \bigl( X_n^* \bigr) \Bigr] \leq 2\epsilon~.
\]

Summing up the two superior limits, we finally get
\[
\limsup \ \E r_n \leq
\limsup \ \mu^* - \E \Bigl[ \mu^*_{g(n)} \Bigr]  +
\limsup \ \E \Bigl[ \mu^*_{g(n)} \Bigr] - \E \Bigl[ \mu \bigl( X_n^* \bigr) \Bigr]
\leq 3\epsilon~;
\]
since this is true for all arbitrary $\epsilon > 0$, the proof is concluded.
\end{proof}

\subsubsection{Explorability of the family $\cC \bigl( \cP([0,1])^\cX \bigr)$ implies separability of $\cX$}

We now prove the direct part of the characterization provided
by Theorem~\ref{th:CNSexplo}. It basically follows from the impossibility of a uniform exploration, as
asserted by Lemma~\ref{lm:Gspace}.

\begin{proof}
Let $\cX$ be a non-separable metric space with metric denoted by $d$.
Let $A$ be an arbitrary uncountable subset of $\cX$ and let $\rho > 0$ be defined as
in Lemma~\ref{lm:Bil}; in particular, the balls $B(a,\rho/2)$ are disjoint,
for $a \in A$.

We now consider the subset of $\cC \bigl( \cP([0,1])^\cX \bigr)$ formed by the environments $E_a$
defined as follows. They are indexed by $a \in A$ and their corresponding
mean-payoff functions are given by
\[
\mu_a : x \in \cX \longmapsto \left( 1 - \frac{d(x,a)}{\rho/2} \right)^+~.
\]
The associated environments $E_a$ are deterministic, in the sense that they are defined as $E_a(x) = \delta_{\mu_a(x)}$.
Note that each $\mu_a$ is continuous, that $\mu_a(x) > 0$ for all $x \in B(a,\rho/2)$ but
$\mu_a(x) = 0$ for all $x \in \cX \setminus B(a,\rho/2)$;
that the best arm under $E_a$ is $a$ and that its gets a reward equal to
$\mu_a^* = \mu_a(a) = 1$.

We fix a forecaster and
denote by $\E_a$ the expectation under environment $E_a$ with respect with the auxiliary randomizations used
by the forecaster. Since $\mu_a$ vanishes outside $\bigl( B(a,\rho/2) \bigr)$ and has a maximum equal to $1$,
$$\E_a r_n = 1 - \E_a \left[ \int_\cX \mu_a(x) \,\mbox{d}\psi_n(x) \right]
\geq 1 - \E_a \Bigl[ \psi_n \bigl( B(a,\rho/2) \bigr) \Bigr]~.$$
We now show the existence of a non-empty set $A'$ such that for all $a \in A'$ and $n \geq 1$,
\begin{equation}
\label{eq:Brhoa}
\E_a \Bigl[ \psi_n \bigl( B(a,\rho/2) \bigr) \Bigr] = 0~;
\end{equation}
this indicates that $\E_a r_n = 1$ for all $n \geq 1$ and $a \in A'$, thus
preventing in particular $\cC \bigl( \cP([0,1])^\cX \bigr)$ from being explorable
by the fixed forecaster.

\medskip
The set $A'$ is constructed by studying the behavior of the forecaster under the
environment $E_0$ yielding deterministic null rewards throughout the space, i.e.,
associated with the mean-payoff function $x \in \cX \mapsto \mu_0(x) = 0$.
In the first round, the forecaster chooses a deterministic distribution $\phi_1 = \phi_1^0$ over
$\cX$, picks $I_1$ at random according to $\phi^0_1$, gets a deterministic payoff
$Y_1 = 0$, and finally recommends $\psi_1^0(I_1) = \psi_1(I_1,Y_1)$
(which depends on $I_1$ only, since the obtained payoffs are all null in a deterministic way).
In the second round, it chooses an allocation $\psi_2^0(I_1)$ (that depends only on $I_1$, for the same reasons as
before), picks $I_2$ at random according to $\psi^0_2(I_1)$, gets a null reward, and recommends $\psi^0_2(I_1,I_2)$;
and so on.

We denote by $\mathbb{A}$ the probability distribution giving the auxiliary randomizations used to draw the
$I_t$ at random, and for all integers $t$ and all measurable applications
\[
\nu : (x_1,\ldots,x_t) \in \cX^t \longmapsto \nu(x_1,\ldots,x_t) \in \cP(\cX)
\]
we introduce the distributions $\mathbb{A} \cdot \nu \in \cP(\cX)$ defined as the following
mixture of distributions.
For all measurable sets $V \subseteq \cX$,
\[
\mathbb{A} \cdot \nu (V) = \E_\mathbb{A} \left[ \int_\cX \mathbb{I}_V \,\mbox{d}\nu(I_1,\ldots,I_t) \right]~.
\]
A probability distribution can only put a positive mass on an at most countable number of disjoint sets.
Therefore, let $B_n$ and $C_n$ be defined as the at most countable sets of $a$ such that, respectively,
$\mathbb{A}\cdot\phi^0_n$ and $\mathbb{A}\cdot\psi^0_n$ give a positive probability mass to $B(a,\rho/2)$.
Then, let
\[
A' = A \setminus \left( \bigcup_{n \geq 1} B_n \, \cup \, \bigcup_{n \geq 1} C_n \right)
\]
be the uncountable, thus non empty, set of those elements of $A$ which are in no $B_n$ or $C_n$.

By construction, for all $a \in A'$, the forecaster
only gets null rewards; this is because $a$ is in no $B_n$ and therefore, with probability 1,
none of the $\varphi_n^0$ hits $B(a,\rho/2)$, which is exactly the set of those elements
of $\cX$ for which $\mu_a > 0$.
As a consequence, the forecaster behaves similarly under the environments $E_a$ and $E_0$, which means
that for all measurable sets $V \subseteq \cX$ and all $n \geq 1$,
\[
\E_a \bigl[ \phi_n (V) \bigr] = \mathbb{A} \cdot \phi_n^0 (V)
\quad \mbox{and} \quad
\E_a \bigl[ \psi_n (V) \bigr] = \mathbb{A} \cdot \psi_n^0 (V)~.
\]
In particular, since $a$ is in no $C_n$,
it hits in no recommendation $\psi^0_n$
the ball $B(a,\rho/2)$, which is exactly what remained to be proved, see~(\ref{eq:Brhoa}).
\end{proof}

\subsubsection{The countable case of Corollary~\ref{cor:count}}

We adopt an ``{\`a} la Bourbaki'' approach and derive this special case from the
general theory.

\begin{proof}
We endow $\cX$ with the discrete topology, i.e., choose the distance
\[
d(x,y) = \mathbb{I}_{ \{ x \ne y \} }~.
\]
Then, all applications defined on $\cX$ are continuous; in particular,
\[
\cC \bigl( \cP([0,1])^\cX \bigr) = \cP([0,1])^\cX~.
\]
In addition, $\cX$ is then separable if and only if it is countable.
The result thus follows immediately from Theorem~\ref{th:CNSexplo}.
\end{proof}

\subsection{An additional remark about uniform bounds}

In this paper, we mostly consider non-uniform bounds
(bounds that are individual as far as the environments are concerned).
As for uniform bounds, i.e., bounds on quantities of the form
\[
\sup_{E \in \cF} \E R_n \qquad \mbox{or} \qquad
\sup_{E \in \cF} \E r_n
\]
for some family $\cF$, two observations can be made.

First, it is easy to see that no sublinear uniform bound can be obtained for the
family of all continuous environments, as soon as there exists infinitely many disjoint open balls.

However one can exhibit such sublinear uniform bounds in some specific scenarios;
for instance, when $\cX$ is totally bounded and $\cF$ is formed by continuous functions
with a common bounded Lipschitz constant.

\section*{Acknowledgements}

The authors acknowledge support by
the French National Research Agency (ANR)
under grants 08-COSI-004 ``Exploration--exploitation for efficient resource allocation'' (EXPLO/RA)
and JCJC06-137444 ``From applications to theory in learning and adaptive statistics'' (ATLAS), as well as
by the PASCAL Network of Excellence under EC grant {no.} 506778.

An extended abstract of the present paper appeared in the \emph{Proceedings of the 20th International
Conference on Algorithmic Learning Theory} (ALT'09).

\appendix
\newpage

\section{Appendix}

\subsection{Proof of the second statement of Proposition~\ref{prop:unif}}
\label{sec:app1}

We use below the notations introduced in the proof of the first statement of
Proposition~\ref{prop:unif}.

\begin{proof}
Since some regret is suffered only
when an arm with suboptimal expectation has the best empirical performance,
$$\E r_n \leq \left( \max_{i = 1,\ldots,K} \Delta_i \right) \P \left\{ \max_{i : \Delta_i > 0}
\wh{\mu}_{i,n} \geq \wh{\mu}_{i^*,n} \right\}~.$$
Now, the quantity of interest can be rewritten as
\[
\left\lfloor \frac{n}{K} \right\rfloor \left( \max_{i : \Delta_i > 0} \wh{\mu}_{i,n} - \wh{\mu}_{i^*,n} \right)
= f \! \left( \vec{X}_1, \ldots, \vec{X}_{\left\lfloor n/K \right\rfloor} \right)
\]
for some function $f$, where for all $s = 1,\ldots,\lfloor n/K \rfloor$, we denote by
$\vec{X}_s$ the vector $(X_{1,s},\ldots,X_{K,s})$.
(The function $f$ is defined as a maximum of at most $K-1$ sums of differences.)
We apply the method of bounded differences, see \cite{McD89}, see also
\cite[Chapter~2]{DL01}. It is straightforward
that, since all random variables of interest take values in $[0,1]$,
the bounded differences condition is satisfied with ranges all equal to 2.
Therefore, the indicated concentration inequality states that
$$\P \left\{ \left( \max_{i : \Delta_i > 0}
\wh{\mu}_{i,n} - \wh{\mu}_{i^*,n} \right)
- \E \left[ \max_{i : \Delta_i > 0}
\wh{\mu}_{i,n} - \wh{\mu}_{i^*,n} \right] \geq \epsilon
\right\}
\leq \exp \left( - \frac{2 \left\lfloor n/K \right\rfloor \epsilon^2}{4} \right)$$
for all $\epsilon > 0$. We choose
$$\epsilon = - \E \left[ \max_{i : \Delta_i > 0} \wh{\mu}_{i,n} - \wh{\mu}_{i^*,n} \right]
\geq \min_{i : \Delta_i > 0} \Delta_i - \E \left[ \max_{i : \Delta_i > 0} \bigl\{
\wh{\mu}_{i,n} - \wh{\mu}_{i^*,n} + \Delta_i \bigr\} \right]$$
(where we used that the maximum of $K$ first quantities
plus the minimum of $K$ other quantities is less than the maximum
of the $K$ sums). We now argue that
\[
\E \left[ \max_{i : \Delta_i > 0} \bigl\{
\wh{\mu}_{i,n} - \wh{\mu}_{i^*,n} + \Delta_i \bigr\} \right] \leq
\sqrt{\frac{\ln K}{\lfloor n/K \rfloor}}~;
\]
this is done by a classical argument, using bounds on
the moment generating function of the random variables of interest.
Consider 
\[
Z_i = \lfloor n/K \rfloor \bigl( \wh{\mu}_{i,n} - \wh{\mu}_{i^*,n} + \Delta_i \bigr)
\]
for all $i = 1,\ldots,K$; they correspond to centered sums of $2 \lfloor n/K \rfloor$
independent random variables taking values in $[0,1]$ or $[-1,0]$.
Hoeffding's lemma (see, e.g.,
\cite[Chapter~2]{DL01}) thus imply that for all $\lambda > 0$,
\[
\E \left[ e^{\lambda Z_i} \right] \leq \exp \left( \frac{1}{8} \lambda^2 \,\, 2 \lfloor n/K \rfloor \right)
= \exp \left( \frac{1}{4} \lambda^2 \lfloor n/K \rfloor \right)~.
\]
A well-known inequality for maxima of subgaussian random variables (see~\cite[Chapter 2]{DL01}) then yields
\[
\E \left[ \max_{i=1,\ldots,K} Z_i \right] \leq \sqrt{\lfloor n/K \rfloor \ln K}~,
\]
which leads to the claimed upper bound.
Putting things together, we get that for the choice
$$\epsilon = - \E \left[ \max_{i : \Delta_i > 0} \wh{\mu}_{i,n} - \wh{\mu}_{i^*,n} \right]
\geq \min_{i : \Delta_i > 0} \Delta_i - \sqrt{\frac{\ln K}{\lfloor n/K \rfloor}} > 0$$
(for $n$ sufficiently large, a statement made precise below),
we have
\begin{eqnarray*}
\P \left\{ \max_{i : \Delta_i > 0} \wh{\mu}_{i,n} \geq \wh{\mu}_{i^*,n} \right\}
& \leq & \exp \left( - \frac{2 \left\lfloor n/K \right\rfloor \epsilon^2}{4} \right) \\
& \leq & \exp \left( - \frac{1}{2} \left\lfloor \frac{n}{K} \right\rfloor \left(
\min_{i : \Delta_i > 0} \Delta_i - \sqrt{\frac{\ln K}{\lfloor n/K \rfloor}} \, \right)^2 \right)~.
\end{eqnarray*}
The result follows for $n$ such that
\[
\min_{i : \Delta_i > 0} \Delta_i - \sqrt{\frac{\ln K}{\lfloor n/K \rfloor}}
\geq (1-\eta) \min_{i : \Delta_i > 0} \Delta_i~;
\]
the second part of the statement of Proposition~\ref{prop:unif} indeed only considers such $n$.
\end{proof}

\subsection{Detailed discussion of the heuristic arguments presented in Section~\ref{sec:comparison}}
\label{sec:app2}

We first state the following corollary to Lemma~\ref{lm:UCBp-MPA}.

\begin{theorem}
\label{th:UCBp-MPA-0}
The allocation strategy given by UCB$(\a)$ (where $\a > 1$) associated with the recommendation given by the most played arm
ensures that
\[
\E r_n \leq \frac{1}{\a-1} \sum_{i \ne i^*} \left( \frac{\beta n}{\Delta_i^2} - 1\right)^{2(1-\a)}
\]
for all $n$ sufficiently large, {e.g.}, such that
\[
\frac{n}{\ln n} \geq \frac{4\a+1}{\beta} \quad \mbox{and} \quad
n \geq \frac{K+2}{\beta} ( \Delta' )^2~,
\]
where
$\Delta' = \max_i \Delta_i$ and we denote by $K^*$ the number of optimal arms and
\[
\beta = \frac{1}{\displaystyle{\frac{K^*}{\Delta^2} + \sum_{i \ne i^*} \frac{1}{\Delta_i^2}}}~.
\]
\end{theorem}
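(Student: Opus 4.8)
The plan is to obtain this bound as a direct application of Lemma~\ref{lm:UCBp-MPA}, the only new idea being to replace the uniform weights $a_i = 1/K$ used for Theorem~\ref{th:UCBp-MPA-1} by a choice tailored to the gaps. Concretely, I would set $a_i = \beta/\Delta_i^2$ for every suboptimal arm $i$ and $a_{i^*} = \beta/\Delta^2$ for each of the $K^*$ optimal arms. The motivation is transparent: the conclusion of Lemma~\ref{lm:UCBp-MPA} contains the factors $(a_i n - 1)^{2(1-\alpha)}$, and substituting $a_i = \beta/\Delta_i^2$ turns these into exactly the $(\beta n/\Delta_i^2 - 1)^{2(1-\alpha)}$ appearing in the statement.

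First I would check that this choice meets the three hypotheses of Lemma~\ref{lm:UCBp-MPA}. Nonnegativity is immediate. The normalization $\sum_i a_i = 1$ is precisely what the definition of $\beta$ encodes: summing the weights gives $\beta\bigl(K^*/\Delta^2 + \sum_{i \ne i^*} 1/\Delta_i^2\bigr) = \beta \cdot (1/\beta) = 1$. The monotonicity requirement, namely $a_i \leq a_{i^*}$ for all suboptimal $i$ and optimal $i^*$, follows from $\Delta_i \geq \Delta$ (as $\Delta$ is the smallest positive gap), which yields $1/\Delta_i^2 \leq 1/\Delta^2$ and hence $a_i \leq a_{i^*}$. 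With the hypotheses verified, plugging the weights into the lemma's conclusion produces the claimed upper bound on $\E r_n$ with no further work.

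It then remains to translate the range-of-validity conditions, and this is the only part requiring any computation. The lemma demands, for every suboptimal arm $i$, both $a_i n \geq 1 + (4\alpha \ln n)/\Delta_i^2$ and $a_i n \geq K+2$. Multiplying the first inequality through by $\Delta_i^2 > 0$ gives $\beta n \geq \Delta_i^2 + 4\alpha \ln n$; since $\Delta_i \leq 1$ and $\ln n \geq 1$ for $n$ large, the assumption $n/\ln n \geq (4\alpha+1)/\beta$ (equivalently $\beta n \geq 4\alpha \ln n + \ln n \geq 4\alpha \ln n + 1$) suffices uniformly in $i$. The second inequality becomes $\beta n \geq (K+2)\Delta_i^2$, which holds for all $i$ once $n \geq (K+2)(\Delta')^2/\beta$, because $\Delta_i \leq \Delta'$. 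I do not anticipate any genuine obstacle here: the substantive arguments—the contrapositive showing that a suboptimal most-played arm forces $T_i(n) \geq a_i n$ for some suboptimal $i$, together with the tail bound~(\ref{eq:controlMPA2})—are already carried out inside Lemma~\ref{lm:UCBp-MPA}, so the proof reduces to the bookkeeping above.
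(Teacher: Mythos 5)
Your proposal is correct and follows exactly the paper's own proof: the paper also obtains Theorem~\ref{th:UCBp-MPA-0} by applying Lemma~\ref{lm:UCBp-MPA} with $a_i = \beta/\Delta_i^2$ for suboptimal arms and $a_{i^*} = \beta/\Delta^2$ for optimal arms, with $\beta$ the normalization constant. Your verification of the lemma's hypotheses and the translation of the range-of-validity conditions are the (straightforward) details the paper leaves implicit, and they check out.
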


\begin{proof}
We apply Lemma~\ref{lm:UCBp-MPA}
with the choice $a_i = \beta/\Delta_i^2$
for all suboptimal arms $i$ and $a_{i^*} = \beta/\Delta^2$
for all optimal arms $i^*$, where $\beta$ denotes the
normalization constant.
\end{proof}

For illustration, consider the case when there is one optimal arm, one $\Delta$--suboptimal arm and $K-2$ arms that
are $2\Delta$--suboptimal. Then
\[
\frac{1}{\beta} = \frac{2}{\Delta^2}+\frac{K-2}{(2\Delta)^2}=\frac{6+K}{4\Delta^2}~,
\]
and the previous bound of Theorem~\ref{th:UCBp-MPA-0} implies that
\begin{equation}
\label{eq:ucb.example}
\E r_n \leq \frac{1}{\a-1} \left( \frac{4 n}{6+K} - 1 \right)^{2(1-\a)} + \frac{K-2}{\a-1} \left( \frac{n}{6+K} - 1 \right)^{2(1-\a)}
\end{equation}
for all $n$ sufficiently large, {e.g.},
\begin{equation}
\label{eq:ass.on.n2}
n \geq \max \,\, \left\{ (K+2)(6+K), \ (4\a+1)\left( \frac{6+K}{4\Delta^2}\right)\ln n \right\}~.
\end{equation}
Now, the upper bound on $\E r_n$ given in Proposition~\ref{prop:unif}
for the uniform allocation associated with the recommendation provided by the empirical best arm is larger than
\[
\Delta e^{-\Delta^2 \lfloor n/K \rfloor}~, \qquad \mbox{ for all } n\geq K.
\]
Thus for $n$ moderately large, e.g., such that $n \geq K$ and
\begin{equation}\label{eq:ass.on.n}
\lfloor n/K \rfloor \leq (4\a+1) \left( \frac{6+K}{4\Delta^2} \right)\frac{\ln n}{K}~,
\end{equation}
the bound for the uniform allocation is at least
\[
\Delta \exp \left( -\Delta^2 (4\a+1) \left( \frac{6+K}{4\Delta^2} \right) \, \frac{\ln n}{K} \right)
= \Delta n^{ - (4\a+1) (6+K)/4K }~,
\]
which may be much worse than the upper bound (\ref{eq:ucb.example}) for the UCB$(\a)$ strategy
whenever $K$ is large, as can be seen by comparing the exponents $-2(\a-1)$ versus $-(4\a+1) (6+K)/4K$.

The reason is that the uniform allocation strategy only samples $\lfloor n/K\rfloor$ each arm,
whereas the UCB strategy focuses rapidly its exploration on the better arms.

%

\bibliographystyle{plainnat}
\bibliography{Biblio}

\end{document}